\newcommand\cyr{%
\renewcommand\rmdefault{wncyr}%
\renewcommand\sfdefault{wncyss}%
\renewcommand\encodingdefault{OT2}%
\normalfont
\selectfont}
\DeclareTextFontCommand{\textcyr}{\cyr}
\newfont{\cmbsy}{cmbsy10}
\newfont{\cmmib}{cmmib10}
\newcommand{\Orden}{\mathop{\hbox{\cmbsy O}}\nolimits}
\def\R{\mathbf{R}}
\def\C{\mathbf{C}}
\def\Re{\mathrm{Re\,}}
\DeclareMathOperator*{\card}{card}
\def\Re{\operatorname{Re}}
\def\medio{{\textstyle \frac{1}{2}}\,}
\def\tt{\alpha}
\newtheorem{theorem}{Theorem}[section]
\newtheorem*{cor}{Corollary}
\newtheorem{corollary}[theorem]{Corollary}
\newtheorem{lemma}[theorem]{Lemma}
\newtheorem*{maintheorem}{Theorem \ref{maintheorem}}
\theoremstyle{definition}
\newtheorem{definition}[theorem]{Definition}
\theoremstyle{remark}
\begin{document}

\title[Distribution of normalized zeros]
{On the distribution  (mod 1) of the normalized zeros of the Riemann Zeta-function}
\author[Arias de Reyna]{J. Arias de Reyna}
\address{Facultad de Matemáticas \\
Univ.~de Sevilla \\
Apdo.~1160
 \\
41080-Sevilla \\
Spain} 
\thanks{Author supported by  MINECO grant MTM2012-30748.}
\email{arias@us.es}

\date{\today}

\begin{abstract}
We consider the problem whether the ordinates of the non-trivial zeros of $\zeta(s)$ 
are uniformly distributed modulo the Gram points, or equivalently, if the normalized
zeros $(x_n)$ are uniformly distributed modulo 1. Odlyzko conjectured this 
to be true. This is far from being proved, even assuming the Riemann hypothesis 
(RH, for short).

Applying the Piatetski-Shapiro $11/12$ Theorem we are able to show that, 
for $0<\kappa<6/5$,  the mean 
value  $\frac1N\sum_{n\le N}\exp(2\pi i \kappa x_n)$ tends to zero. The case $\kappa=1$
is especially interesting. In this case the Prime Number Theorem is sufficient to prove
that the mean value is $0$, but the rate of convergence is slower than for 
other values of $\kappa$.
Also the case $\kappa=1$ seems to contradict the behavior of the first 
two million zeros of $\zeta(s)$. 

We make an effort not to use the RH. So our Theorems
are absolute. We also put forward the interesting question: will the 
uniform distribution of 
the normalized zeros be compatible with the GUE hypothesis?

Let $\rho=\frac12+i\tt$ run through the complex zeros of zeta. We do not assume
the RH so that  $\tt$ may be complex.
For $0<\kappa<\frac65$ we prove that 
\[\lim_{T\to\infty}\frac{1}{N(T)}\sum_{0<\Re\tt\le T}e^{2i\kappa\vartheta(\tt)}=0\]
where $\vartheta(t)$ is the phase of $\zeta(\frac12+it)=e^{-i\vartheta(t)}Z(t)$. 
\end{abstract}

\maketitle

%\tableofcontents

\section{Introduction.}

This paper deals with the distribution of the ordinates of the non-trivial 
zeros of $\zeta(s)$.
This distribution  has 
received much attention. On the assumption of the RH 
Rademacher \cite{Rad} (also \cite[{p.~434--459}]{MR0505096})  showed that the sequence $(\gamma_n)$ is uniformly distributed
modulo 1.  Here $\rho_n=\beta_n+i\gamma_n$ are the zeros of $\zeta(s)$ in the upper half plane,
counted with multiplicity,  ordered by increasing
$\gamma_n$, and ties being broken by ordering $\beta_n$ from smallest to largest. 

Given a sequence of non-negative real numbers $(\gamma_n)$ and another 
strictly increasing sequence of non-negative real numbers $(g_n)$ with $\lim_n g_n=\infty$,  it is said
that the $(\gamma_n)$ are \emph{uniformly distributed modulo $(g_n)$} if the 
numbers $y_n$ defined by 
\[ y_n=\frac{\gamma_n-g_m}{g_{m+1}-g_m},\quad\text{where}\quad \gamma_n\in[g_m,g_{m+1})\]
are uniformly distributed in $[0,1]$, (compare \cite[p.~4]{MR0419394}).

Our problem is whether  the ordinates $(\gamma_n)$ of the zeros of 
$\zeta(s)$ are uniformly distributed modulo the Gram points $(g_n)$. 
In a non published report,  Odlyzko \cite[p.~60]{O} conjectured that the 
ordinates of the zeros $\gamma_n$ 
are not  related  to the Gram points for $\gamma_n$ large.
Hence he conjectured that the normalized zeros will be uniformly distributed 
modulo $1$. This is far from being proved, even assuming the RH. 

Since the Gram points are defined by the equation $\pi n=\vartheta(g_n)$, our problem
is equivalent to the question as to whether the \emph{normalized zeros} $x_n:=\frac{1}{\pi}
\vartheta(\gamma_n)\approx \frac{\gamma_n}{2\pi}
\log\frac{\gamma_n}{2\pi e}$ are uniformly distributed  $\bmod\ 1$. By the asymptotic properties
of $\vartheta(t)$ this is equivalent to the uniform distribution $\bmod\ 1$  of the numbers 
$\frac{\gamma_n}{2\pi}\log\frac{\gamma_n}{2\pi e}$ (see Theorem 1.2 in \cite[{p.3}]{MR0419394}).  The question is especially interesting 
because the normalized zeros have average spacing $1$.  

Hardy and Littlewood \cite[{p. 162--177}]{HL} proved that for any $a>0$, uniformly for $x\in J$, 
where $J$ is
any compact interval of positive numbers, we have
\[\sum_{0<\gamma<T}e^{a\rho\log(-i\rho)}x^\rho\rho^{-\frac{a}{2}}=\Orden(T^{\frac12+\frac{a}{2}})\]
from which, assuming the RH, they derive that for any $a$, $\theta>0$, we have 
\[\sum_{0<\gamma<T}e^{ai\gamma\log (\gamma\theta)}=\Orden(T^{\frac12+\frac{a}{2}}).\]

Recall that $g_n\sim 2\pi n/\log n$. Fujii \cite{MR511699} proved  that for 
any $a>0$ and $b>0$, the sequence $(\gamma_n)$ is
uniformly distributed modulo $((\log n)^a \cdot bn/\log n)$. 

In \cite{MR1179115} Fujii proves, under the RH, that for any positive $\kappa$ and $a$
we have
\[\sum_{0<\gamma\le T}e^{i\kappa \gamma\log\frac{\kappa\gamma}{2\pi e a}}=
-e^{\frac{\pi i}{4}} \frac{\sqrt{a}}{\kappa}
\sum_{n\le (\frac{\kappa T}{2\pi a})^\kappa}
\frac{\Lambda(n)}{n^{\frac12-\frac{1}{2\kappa}}}e^{-2\pi i a n^{\frac1\kappa}}+
\Orden(T^{\frac{\kappa}{2}}\log T \log\log T)+\Orden(T^{\frac12-\frac{\kappa}{2}}\log T).\]

Our main result is an absolute version of this equation. Since we do not assume the RH, we define 
the numbers $\tt$ in such a way that the zeros of zeta are given by
$\rho=\frac12+i\tt$. Here the numbers $\tt$ may be (non-real) complex numbers, if the Riemann hypothesis
fails. Then  we show without any assumption the following.
\begin{maintheorem}
For $\kappa>0$  we have
\begin{equation}\label{E:main1}
\sum_{0<\Re\tt\le T}e^{2i\kappa\vartheta(\tt)}=-\frac{e^{\frac{\pi i}{4}(1-\kappa)}}{\sqrt{\kappa}}\sum_{n\le (T/2\pi)^{\kappa}}
\frac{\Lambda(n)}{n^{\frac12-\frac{1}{2\kappa}}}
e^{-2\pi i \kappa n^{1/\kappa}}+\Orden_\kappa(T^{\frac{1-\kappa}{2}}\log T)+ 
\Orden_\kappa(T^{\frac{\kappa}{2}}\log^2T).
\end{equation}
\end{maintheorem}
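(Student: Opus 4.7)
The plan is to reduce the sum to a contour integral via the argument principle, use the Dirichlet series for $-\zeta'(s)/\zeta(s)$, and evaluate the resulting integrals by the method of stationary phase. This is the strategy of Fujii \cite{MR1179115}, but executed across the entire critical strip so that no hypothesis on the location of the zeros is needed.

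The key observation is that, if $\chi(s)$ denotes the factor in the functional equation $\zeta(s)=\chi(s)\zeta(1-s)$ and we fix the branch of $\chi(1-s)^{\kappa}$ that coincides on the critical line with $e^{2i\kappa\vartheta(t)}$, then the summand $e^{2i\kappa\vartheta(\tt)}$ equals $\chi(1-\rho)^{\kappa}$ at every zero $\rho=\frac12+i\tt$. Applying the argument principle to the rectangle with vertices $\sigma_{1},\sigma_{2},\sigma_{2}+iT,\sigma_{1}+iT$, with $\sigma_{1}<0$ chosen to avoid trivial zeros and $\sigma_{2}>1$, yields
\[\sum_{0<\Re\tt\le T}e^{2i\kappa\vartheta(\tt)}=\frac{1}{2\pi i}\oint \frac{\zeta'(s)}{\zeta(s)}\chi(1-s)^{\kappa}\,ds,\]
with no contribution from the pole of $\zeta$ at $s=1$, since $\chi(0)=0$.

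The main calculation takes place on the right edge $\Re s=\sigma_{2}$. Substituting $-\zeta'(s)/\zeta(s)=\sum_{n}\Lambda(n)n^{-s}$, interchanging sum and integral, and shifting each resulting integral to the critical line reduces the $n$th contribution to
\[J_{n}=\frac{i}{\sqrt{n}}\int_{0}^{T}e^{i\phi_{n}(t)}\,dt,\qquad \phi_{n}(t)=2\kappa\vartheta(t)-t\log n.\]
The phase has a unique stationary point $t_{n}=2\pi n^{1/\kappa}$ with $\phi_{n}(t_{n})=-2\pi\kappa n^{1/\kappa}-\kappa\pi/4$ and $\phi_{n}''(t_{n})=\kappa/t_{n}>0$, so the stationary-phase formula (with factor $e^{i\pi/4}$) produces the leading term, and summation with weight $-\Lambda(n)/(2\pi i)$ over the range $t_{n}\le T$, equivalently $n\le(T/2\pi)^{\kappa}$, reproduces the asserted main term with the constant $-e^{i\pi(1-\kappa)/4}/\sqrt{\kappa}$. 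The left edge is handled via the functional equation $\zeta'(s)/\zeta(s)=\chi'(s)/\chi(s)-\zeta'(1-s)/\zeta(1-s)$, which, after the substitution $w=1-s$, yields an analogous Dirichlet-series expansion whose phase $-2\kappa\vartheta(u)-u\log n$ has no stationary point in $(0,T)$; this piece is therefore absorbed into the error.

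Finally, there are three sources of error to control. On the horizontal segments $\Im s=T$, choosing $T$ so that the unconditional bound $\zeta'(\sigma+iT)/\zeta(\sigma+iT)=\Orden(\log^{2}T)$ holds (a standard consequence of the density of zeros, cf.\ Titchmarsh \cite{MR0505096}, \S9.7), together with $|\chi(1-s)^{\kappa}|\asymp(T/2\pi)^{\kappa(\sigma-1/2)}$, yields $\Orden_{\kappa}(T^{\kappa/2}\log^{2}T)$. The tail $n>(T/2\pi)^{\kappa}$, where $\phi_{n}$ has no stationary point in $(0,T)$, is bounded by integration by parts and, summed against $\Lambda(n)$, contributes $\Orden_{\kappa}(T^{(1-\kappa)/2}\log T)$; the stationary-phase remainder for $n\le(T/2\pi)^{\kappa}$ and the left-edge contribution both fit into the same bound. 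The principal obstacles I expect are (i)~defining the branch of $\chi(1-s)^{\kappa}$ consistently throughout the strip so that it agrees with $e^{2i\kappa\vartheta(\tt)}$ even at zeros off the critical line, and (ii)~making the stationary-phase estimate uniform in $n$, especially in the transition region $n\approx(T/2\pi)^{\kappa}$ where $t_{n}$ is close to the endpoint~$T$.
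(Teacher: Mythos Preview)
Your strategy---argument principle for $\chi(s)^{-\kappa}\,\zeta'(s)/\zeta(s)$ around a rectangle, Dirichlet-series expansion on the right edge, and stationary phase---is exactly the paper's, and you have correctly located the main term and the two anticipated obstacles. There is, however, a genuine gap in the step ``shifting each resulting integral to the critical line''. On $\sigma=\tfrac12$ the series $\sum_n\Lambda(n)n^{-1/2}\int_0^T e^{i\phi_n(t)}\,dt$ is not absolutely convergent: for $n\gg(T/2\pi)^\kappa$ the first-derivative test yields only $\bigl|\int_0^T e^{i\phi_n}\bigr|\ll 1/\log\bigl(n(2\pi/T)^\kappa\bigr)$, and $\sum_n\Lambda(n)n^{-1/2}/\log n$ diverges; equivalently, the horizontal segments produced by the individual shifts cannot be summed. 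The paper avoids this entirely by keeping the right edge at $\sigma_1=1+1/\log T$, where $\sum_n\Lambda(n)n^{-\sigma_1}=-\zeta'(\sigma_1)/\zeta(\sigma_1)=\Orden(\log T)$ converges, and carrying out the stationary-phase analysis there (via the asymptotic expansion of $\chi(s)^{-\kappa}$, which supplies the same phase); the extra amplitude factor $(T/2\pi)^{\kappa(\sigma_1-1/2)}$ is harmless. Incidentally, the error $\Orden_\kappa(T^{(1-\kappa)/2}\log T)$ does not come from the tail $n>(T/2\pi)^\kappa$ as you suggest, but from the transition terms with $n^{1/\kappa}$ close to $T/2\pi$---precisely your obstacle~(ii)---and the paper isolates this in a dedicated lemma.

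Two smaller points where the paper's execution is simpler. The lower edge of the contour is placed at $\Im s=2\pi$ rather than on the real axis, which keeps the integrand inside the region where the branch of $\chi(s)^{-\kappa}$ is defined and sidesteps $s=1$; your observation that $\chi(0)=0$ kills the residue is correct in spirit, but for $0<\kappa<1$ the product behaves like $(s-1)^{\kappa-1}$ and sits on the contour. For the left edge the paper does not invoke the functional equation at all: it simply takes $\sigma_0\le\tfrac12-\tfrac{2}{\kappa}$ (or $\sigma_0=-1$ for $\kappa\ge\tfrac43$), so that $|\chi(s)^{-\kappa}|\asymp t^{\kappa(\sigma_0-1/2)}\le t^{-2}$, and combines this with the elementary bound $\zeta'(s)/\zeta(s)=\Orden(\log t)$ valid to the left of the critical strip to get the whole left-edge integral $=\Orden_\kappa(1)$.
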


Applying, for $\kappa>1$, the Piatetski-Shapiro $11/12$ Theorem, we get
\[\lim_{T\to\infty}\frac{1}{N(T)}\sum_{0<\Re\tt\le T}e^{2i\kappa\vartheta(\tt)}=0,
\qquad 0<\kappa<\frac65.\]

The case $\kappa=1$ is especially interesting, since it seems to contradict the behavior 
of the
first two million zeros of $\zeta(s)$. 

Our results are also very similar to some of Schoi\ss engeier \cite{MR528875}, who
extended the cited analysis of Hardy and Littlewood. In the case $\kappa=1$ our formula gives
the following
\begin{cor} 
The Von Mangoldt function can be approximated in the following way
\[\psi(T/2\pi)=\sum_{n\le T/2\pi}\Lambda(n)=-\sum_{0<\Re\alpha\le T}e^{2 i\vartheta(\alpha)}+
\Orden(T^{\frac12}\log^2T).\]
\end{cor}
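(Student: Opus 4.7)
The plan is to obtain the corollary as an immediate specialization of the Main Theorem with $\kappa=1$, so the only work is arithmetic simplification of the right hand side of \eqref{E:main1}.

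First I would substitute $\kappa=1$ into the prefactor and verify that $e^{\frac{\pi i}{4}(1-\kappa)}/\sqrt{\kappa}$ collapses to $1$, so the sign in front of the $\Lambda$-sum is simply $-1$. Next I would check the summand: the power of $n$ is $n^{-(1/2-1/(2\kappa))} = n^{0} = 1$, and the oscillatory factor becomes $e^{-2\pi i \kappa n^{1/\kappa}} = e^{-2\pi i n}$, which equals $1$ for every positive integer $n$. Therefore the twisted exponential sum degenerates into the unweighted von Mangoldt sum, and the range of summation $n\le (T/2\pi)^\kappa$ becomes $n\le T/2\pi$. That is, the main term on the right of \eqref{E:main1} becomes precisely $-\psi(T/2\pi)$.

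For the error terms with $\kappa=1$, the two contributions are $\Orden(T^{(1-\kappa)/2}\log T) = \Orden(\log T)$ and $\Orden(T^{\kappa/2}\log^2 T) = \Orden(T^{1/2}\log^2 T)$. The first is dominated by the second, so both collapse into a single $\Orden(T^{1/2}\log^2 T)$ error. Putting these pieces together in the specialized identity
\[
\sum_{0<\Re\alpha\le T} e^{2i\vartheta(\alpha)} = -\psi(T/2\pi) + \Orden(T^{1/2}\log^2 T),
\]
and then solving for $\psi(T/2\pi)$, I would recover the statement of the corollary.

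There is essentially no obstacle here beyond bookkeeping, because the analytic content has already been packaged into the Main Theorem; the only subtlety worth flagging is the cancellation of the oscillatory factor $e^{-2\pi i \kappa n^{1/\kappa}}$, which is exactly what makes the case $\kappa=1$ special (and, as noted in the introduction, produces the slower rate of convergence compared with other values of $\kappa$, since the main sum no longer enjoys extra oscillation and reduces to $\psi$ itself).
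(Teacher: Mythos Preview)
Your proposal is correct and matches the paper's approach exactly: the corollary is stated in the paper as the specialization of the Main Theorem to $\kappa=1$, with no separate proof given, and your arithmetic verification of the collapse of the prefactor, the summand, and the error terms is precisely what is intended.
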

This is equivalent to  one of the results of Schoi\ss engeier, but obtained here by a simpler analysis. 

\subsection{Computational data.}
Our interest in the distribution of the $\gamma_n$ with respect to the  Gram points
originates from the observation that the angle of the curve $\Re\zeta(s)=0$  at a 
zero $\frac12+i\gamma$ (on the critical line)  with the positive real axis is equal  to
$\vartheta(\gamma)\bmod \pi$. Odlyzko's list of 
first $2,001,052$ zeros of zeta has been used to generate the following pictures 
of the distribution
of $\frac1\pi\vartheta(\gamma)\bmod\ 1$.
\begin{figure}[H]
  % Requires \usepackage{graphicx}
  \includegraphics[width=6.8cm]{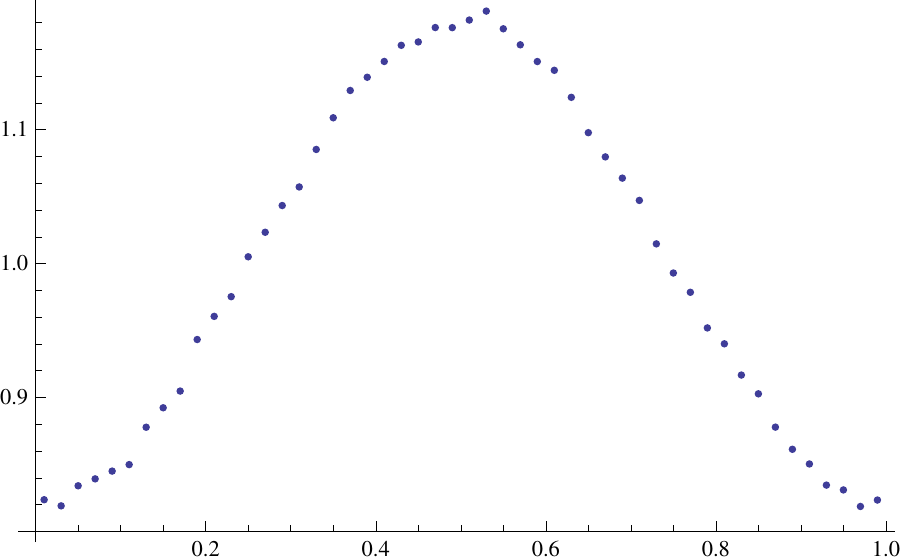}
  %\caption{}%\label{}
  \includegraphics[width=6.8cm]{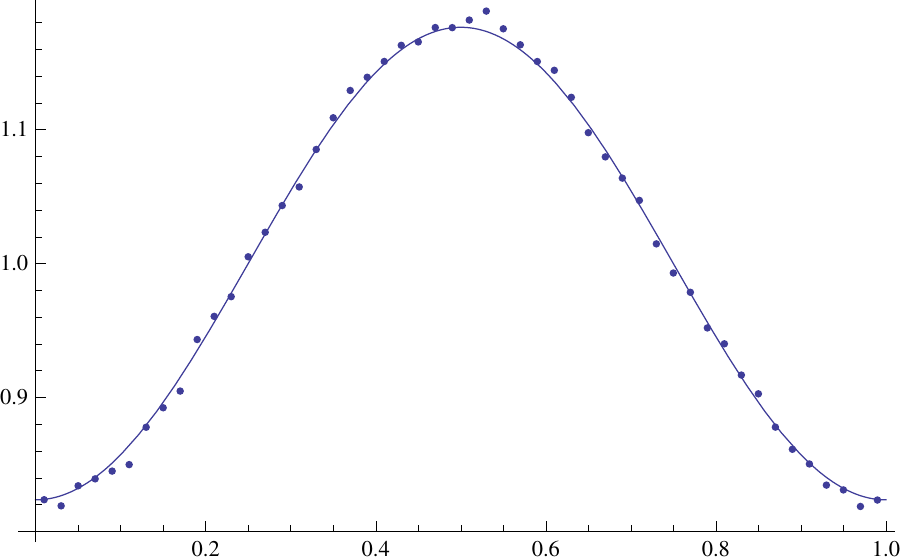}\\
 \caption{Distribution mod 1 of the normalized first $2\,001\,052$ zeros.}
  \label{F:histogram}
\end{figure}

The curve we have drawn, approximately fitting the data, is the density function
$\rho(x)=1-\frac{3}{17}\cos(2\pi x)$.
If the RH is true so that $\alpha_n=\gamma_n$ for all $n$ and this initial distribution
is maintained then we  have
\[\lim_T\frac{1}{N(T)}\sum_{0<\Re \alpha\le T}e^{2i\vartheta(\alpha)}=
\lim_N\frac{1}{N}\sum_{n=1}^N e^{2\pi i x_n}\approx\int_0^1 e^{2\pi i x}\Bigl(1-\frac3{17}
\cos2\pi x\Bigr)\,dx=-\frac{3}{34}.\]
Hence, our Corollary \ref{Cor:Limits}  shows that the trend in the behavior of 
$\gamma_n$ seen in the above figures will not be maintained for larger values of $T$. 

Odlyzko  observed that the distribution of the 
normalized zeros is nearer to a uniform distribution for higher zeros.  
But this seems to be
at odds with Titchmarsh \cite[{Theorem 10.6}]{T} who shows  that the mean values
of $Z(g_{2n})$ and  $Z(g_{2n+1})$ are equal to $2$ and $-2$, respectively.

Hence we ask  the question: \emph{It is true that the normalized zeros are 
uniformly distributed modulo $1$?}

We also comment here about another conjecture regarding the distribution of the zeros 
of zeta. Assuming the RH,
Montgomery \cite{MR0337821} proved his result about the correlation of pairs of 
zeros and stated his \emph{pair correlation conjecture}.

The differences $x_{n+1}-x_n$  of the normalized zeros satisfy 
\[x_{n+1}-x_n=\frac{1}{\pi}\int_{\gamma_n}^{\gamma_{n+1}}\vartheta'(t)\,dt.\]
Since $\vartheta'(t)=\frac12\log\frac{t}{2\pi}+\Orden(t^{-2})$ we have
\[x_{n+1}-x_n\approx \frac{\gamma_{n+1}-\gamma_n}{\pi}\frac12\log\frac{\gamma_n}{2\pi} = \delta_n.\]
So, Montgomery's conjecture is also a conjecture about our normalized zeros.  

A natural question here is:  \emph{Is the pair correlation conjecture compatible with 
a uniform distribution (mod 1) of the normalized zeros?}

\section{Notations and tools.}\label{known facts}

When possible we follow the standard notations.
As in Titchmarsh~\cite[{section 9.4}]{T} the zeros $\beta+i\gamma$ of $\zeta(s)$ 
with $\gamma>0$ are arranged in a  sequence $\rho_n=\beta_n+i\gamma_n$ so that
$\gamma_{n+1}\ge\gamma_n$. We will not assume the Riemann hypothesis (RH for short),
and following Riemann~\cite{R} define $\tt_n$ by $\rho_n=\frac12+i\tt_n$. The 
numbers $\tt_n$ are the zeros of $\Xi(t)$ with positive real part. The RH is equivalent
to the equality $\tt_n=\gamma_n$ for all natural numbers $n$. We denote by $N(T)$,
where $T>0$, the number of zeros of $\zeta(s)$ in the rectangle $0<\sigma<1$, 
$0\le t\le T$. 

The functional equation of $\zeta(s)$ can be written as 
\begin{equation}
\zeta(s)=\chi(s)\zeta(1-s),\quad \chi(s)=\pi^{s-\frac12}\frac{\Gamma(\frac12-\frac12s)}
{\Gamma(\frac12s)}=2^{s-1}\pi^s \sec\medio\pi s/\Gamma(s).
\end{equation}
For $t$ real we have $|\chi(\frac12+it)|=1$, and 
there exist two real and  real analytic functions $\vartheta(t)$ and $Z(t)$
(see Titchmarsh~\cite[{section 4.17}]{T}) such that
\begin{equation}
\zeta(\medio+it)=e^{-i\vartheta(t)}Z(t),\quad \chi(\medio+it)=e^{-2i\vartheta(t)}.
\end{equation}
The function $\chi(s)$ has poles for $s=2n+1$ with $n=0$, $1$, \dots\ and zeros for
$s=-2n$. Let $\Omega$ be the complex plane $\C$ with two cuts along the half-lines
$(-\infty,0]$ and $[1,\infty)$. The function $\chi(s)$ is analytic on the simply 
connected $\Omega$ and 
does not vanish there. So we may define $\log\chi(s)$ in $\Omega$ in such a way that
\begin{equation}
-\log\chi(\medio+it)=2i\vartheta(t),\qquad \vartheta(0)=0.
\end{equation}
The function $\vartheta(t)$ is extended in this way to all $-i(\Omega-\frac12)$ as an analytic
function. 
Also we fix the meaning of 
\begin{equation}
\chi(s)^{-\kappa}:=e^{-\kappa\log\chi(s)}=e^{2i\kappa
\vartheta(\tau)}\quad\text{ where }\quad s=\medio+i\tau\in\Omega. 
\end{equation}
\medskip

\begin{definition}
For any non-trivial zero $\rho=\beta+i\gamma=\frac12+i\tt$ we define the normalized zero 
as
\begin{equation}
x=\frac{1}{\pi}\vartheta(\tt).
\end{equation}
Also let $x_n$ be the normalized zero corresponding to $\rho_n=\beta_n+i\gamma_n$. 
\end{definition}

The function $\vartheta(t)$ is strictly increasing for $t>6.28984\dots$
For integral $k\ge-1$ the Gram point $t=g_k$($>7$) is defined as the unique solution
of $\vartheta(t)=k\pi$ (see \cite[{p. ~126}]{E}).  

In the interval $[0,T]$ there are approximately 
as many Gram points as zeros $\beta+i\gamma$ of
$\zeta(s)$ with $0<\gamma\le T$.  Gram's ``law'' (to which there are many 
exceptions) states that in each Gram interval $(g_n,g_{n+1})$ there is a zero of 
$\zeta(s)$. If Gram's law were generally true, then the RH would be true, the zeros would be simple
and $\gamma_n$ would be an element of $(g_{n-2},g_{n-1})$. Of course Gram's 
law is not true, but it is still  a good
heuristic to locate the zeros of $\zeta(s)$ for relatively small  $t$  which
can be reached by our 
computers.

\medskip
Also, it is well known that
in each interval $(T,T+1)$, with $T\ge2$ we can select a number $T'$ such that if 
$\gamma$ is the ordinate of any zero of $\zeta(s)$ then $|T'-\gamma|\gg \frac{1}{\log T}$.

\bigskip

From the book by Huxley we quote two lemmas which will be essential in our proof
\cite[{p.~88}]{MR1420620}.

\begin{lemma}[First Derivative Test]\label{1dertest}
Let $f(x)$ be real and differentiable on the open interval $(\alpha,\beta)$ with 
$f'(x)$ monotone and $f'(x)\ge\mu>0$ on $(\alpha,\beta)$. Let $g(x)$ be real, and let
$V$ be the total variation of $g(x)$ on the closed interval $[\alpha,\beta]$
plus the maximum modulus of $g(x)$ on $[\alpha,\beta]$. Then
\begin{displaymath}
\Bigl|\int_\alpha^\beta g(x) \exp(2\pi i f(x))\,dx\Bigr|\le \frac{V}{\pi\mu}.
\end{displaymath}
\end{lemma}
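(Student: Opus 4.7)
My plan is to first handle the case $g\equiv 1$ by obtaining a uniform bound on the associated indefinite integral, and then recover the general case by a Riemann--Stieltjes integration by parts against $g$. I set
\[
\phi(x):=\int_\alpha^x e^{2\pi i f(t)}\,dt,
\]
a continuous function on $[\alpha,\beta]$ with $\phi(\alpha)=0$.

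The first task is to prove the uniform bound $\max_{x\in[\alpha,\beta]}|\phi(x)|\le 1/(\pi\mu)$. The idea is to write $e^{2\pi i f(t)}\,dt=(2\pi i f'(t))^{-1}\,d(e^{2\pi i f(t)})$ and integrate by parts in the Stieltjes sense, yielding
\[
\phi(x)=\frac{e^{2\pi i f(x)}}{2\pi i f'(x)}-\frac{e^{2\pi i f(\alpha)}}{2\pi i f'(\alpha)}-\int_\alpha^x e^{2\pi i f(t)}\,d\!\left(\frac{1}{2\pi i f'(t)}\right).
\]
Setting $v(t):=1/(2\pi f'(t))$, which is a real monotone function bounded by $1/(2\pi\mu)$, and using $|e^{2\pi i f(t)}|=1$ together with monotonicity (which gives $\int_\alpha^x|dv(t)|=|v(x)-v(\alpha)|$), I get
\[
|\phi(x)|\le v(x)+v(\alpha)+|v(x)-v(\alpha)|=2\max(v(x),v(\alpha))\le \frac{1}{\pi\mu}.
\]

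With the uniform bound in hand, Riemann--Stieltjes integration by parts against $g$ (valid because $\phi$ is continuous and $g$ has bounded variation) yields
\[
\int_\alpha^\beta g(x)\,e^{2\pi i f(x)}\,dx=g(\beta)\phi(\beta)-\int_\alpha^\beta \phi(x)\,dg(x),
\]
whose modulus is at most $\max|\phi|\cdot(|g(\beta)|+\mathrm{Var}_{[\alpha,\beta]}(g))\le V/(\pi\mu)$ by the definition of $V$. The main delicacy is the sharp constant $1/(\pi\mu)$ in the $\phi$-estimate: the identity $v(x)+v(\alpha)+|v(x)-v(\alpha)|=2\max(v(x),v(\alpha))$ is what combines the boundary and remainder contributions to give $1/(\pi\mu)$, rather than the $3/(2\pi\mu)$ a blunt triangle inequality would produce. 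A secondary, trivial issue is regularity: $f'$ is assumed only monotone, not differentiable, but since $1/f'$ is then of bounded variation, the Stieltjes integrals above are well defined.
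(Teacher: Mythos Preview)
The paper does not supply its own proof of this lemma: it is quoted verbatim from Huxley \cite[p.~88]{MR1420620} as a tool, so there is no ``paper's proof'' to compare against. Your argument is correct and is precisely the standard one (the same as in Huxley and, in essence, Titchmarsh): bound the partial integral $\phi(x)=\int_\alpha^x e^{2\pi i f}$ by writing $e^{2\pi i f}\,dt=(2\pi i f')^{-1}\,d(e^{2\pi i f})$, integrate by parts, and use the monotonicity of $1/f'$ to collapse the three contributions to $2\max(v(x),v(\alpha))\le 1/(\pi\mu)$; then Stieltjes-integrate by parts against $g$.

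One small point worth making explicit for completeness: the hypotheses place $f'$ only on the \emph{open} interval $(\alpha,\beta)$, so strictly speaking $f'(\alpha)$ and $v(\alpha)$ need not exist. Since $f'$ is monotone and $\ge\mu$, the one-sided limit $f'(\alpha+)\in[\mu,+\infty]$ exists; your computation goes through verbatim with $v(\alpha)$ read as $1/(2\pi f'(\alpha+))$ (which is $0$ if the limit is infinite), or equivalently by working on $[\alpha',\beta']\subset(\alpha,\beta)$ and letting $\alpha'\downarrow\alpha$, $\beta'\uparrow\beta$. This is cosmetic and does not affect the constant.
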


\begin{lemma}[Second Derivative Test]\label{2dertest}
 Let $f(x)$ be real and twice differentiable
on the open interval $(\alpha,\beta)$ with $f''(x)\ge\lambda>0$ on $(\alpha,\beta)$. 
Let $g(x)$ be real, and let $V$ be the total variation of $g(x)$ on the closed interval
$[\alpha,\beta]$ plus the maximum modulus of $g(x)$ on $[\alpha,\beta]$. Then
\begin{displaymath}
\Bigl|\int_\alpha^\beta g(x) \exp(2\pi i f(x))\,dx\Bigr|\le \frac{4V}{\sqrt{\pi\lambda}}.
\end{displaymath}
\end{lemma}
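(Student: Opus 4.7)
The plan is to deduce Lemma~\ref{2dertest} from the First Derivative Test (Lemma~\ref{1dertest}) by the standard splitting trick around the unique stationary point of $f$, with a free parameter to be optimized at the end.

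First, since $f''(x)\ge\lambda>0$ on $(\alpha,\beta)$, the derivative $f'$ is strictly increasing, so it vanishes at most once. If $f'$ has no zero in $(\alpha,\beta)$ then it has constant sign throughout, $|f'|$ is bounded below on any closed subinterval, and the conclusion follows directly from Lemma~\ref{1dertest} applied to $\pm f$ (whose oscillatory integrals have the same modulus since $g$ is real), in fact with a better constant. The substantive case is when $f'(c)=0$ for some $c\in(\alpha,\beta)$, and here the mean value theorem together with $f''\ge\lambda$ yields $|f'(x)|\ge\lambda|x-c|$ throughout $(\alpha,\beta)$.

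Next I would introduce a parameter $\delta>0$ and split
\[\int_\alpha^\beta g(x)e^{2\pi i f(x)}\,dx = I_1+I_2+I_3\]
over the three intervals $[\alpha,c-\delta]$, $[c-\delta,c+\delta]$, and $[c+\delta,\beta]$ (intersected with $[\alpha,\beta]$, so that empty pieces are allowed when $c$ is near an endpoint). The middle piece I would bound trivially by $|I_2|\le 2\delta\cdot\max_{[\alpha,\beta]}|g|\le 2\delta V$, using that $V$ dominates the maximum modulus of $g$ by definition. On each outer piece the bound $|f'(x)|\ge\lambda\delta$ holds and $f'$ is monotone, so Lemma~\ref{1dertest} applies (to $f$ or to $-f$, depending on the sign of $f'$) with $\mu=\lambda\delta$; since the total variation and maximum modulus of $g$ on any subinterval are at most those on $[\alpha,\beta]$, this yields $|I_1|+|I_3|\le 2V/(\pi\lambda\delta)$.

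Finally, combining the estimates gives
\[|I|\le 2\delta V+\frac{2V}{\pi\lambda\delta},\]
which I would minimize in $\delta$; the optimum is $\delta=1/\sqrt{\pi\lambda}$, producing exactly $4V/\sqrt{\pi\lambda}$. The main bookkeeping concern is that $c$ may lie within $\delta$ of an endpoint (one of the outer intervals is then empty and $I_2$ has length less than $2\delta$) or that $2\delta$ may exceed $\beta-\alpha$; both effects only improve the estimate, so no separate case analysis is needed. The only real ``obstacle'' is verifying that the same $V$ from the full interval controls both the variation and the maximum on each subinterval, which is immediate from the composite definition of $V$.
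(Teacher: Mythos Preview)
The paper does not prove this lemma at all; it is quoted without proof from Huxley's book \cite[p.~88]{MR1420620}, so there is no in-paper argument to compare against. Your proposal is the standard van der Corput derivation of the second derivative test from the first, and it is essentially what one finds in Huxley (or in Titchmarsh).

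One minor wrinkle: your treatment of the case where $f'$ has no zero in $(\alpha,\beta)$ is not quite right as written. Saying that $|f'|$ is ``bounded below on any closed subinterval'' does not by itself deliver $4V/\sqrt{\pi\lambda}$, because $\inf_{(\alpha,\beta)}|f'|$ may be $0$ (approached at an endpoint), and then Lemma~\ref{1dertest} alone gives no uniform bound, let alone a ``better constant''. The clean fix is already implicit in your later bookkeeping remark: let $c$ be the endpoint at which $|f'|$ is infimized; the mean value inequality $|f'(x)|\ge\lambda|x-c|$ still holds throughout $(\alpha,\beta)$, and your splitting argument then goes through verbatim with one of the outer intervals empty. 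With that adjustment the proof is complete and correct.
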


The next two lemmas can be inferred from  Levinson \cite{L} and Gonek \cite{MR728143}.

\begin{lemma}\label{L:levinson1}
Let $\kappa_0>0$  and $K\subset \R$ a compact set be given. Then there exist 
constants $c>0$,  $C>0$  
such that for any $r>1$,  $a\in K$ and $\kappa\ge \kappa_0$, we have
\begin{displaymath}
\int_{r(1-c)}^{r(1+c)}x^a \exp\Bigl\{2\pi i\Bigl(\kappa x\log\frac{x}{er}\Bigr)\Bigr\}\,dx=
\kappa^{-\frac12}r^{a+\frac12}e^{\frac{\pi i}{4}}e^{-2\pi i \kappa r}+ R,
\end{displaymath}
with $|R|\le C r^a$. 
\end{lemma}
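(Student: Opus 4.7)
The phase $f(x) = \kappa x\log(x/(er))$ has $f'(x) = \kappa\log(x/r)$ and $f''(x) = \kappa/x > 0$, so it has a unique, non-degenerate stationary point at $x = r$ with $f(r) = -\kappa r$. This is the standard setting for the method of stationary phase; my plan is to perform a Morse reduction to a pure Gaussian phase and then bound the remainders by integration by parts, keeping track of the uniformity in $a \in K$ and $\kappa \geq \kappa_0$.

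First I would center the integration at the saddle point by setting $x = r(1+u)$, which rewrites the integral as
\[
r^{a+1}e^{-2\pi i\kappa r}\int_{-c}^{c}(1+u)^{a}\,e^{2\pi i\kappa r\,\psi(u)}\,du,\qquad \psi(u) := (1+u)\log(1+u) - u.
\]
Here $\psi(0) = \psi'(0) = 0$, $\psi''(0) = 1$, and $\psi$ is analytic, strictly convex, and nonnegative on $(-1,1)$. Next I would apply the Morse substitution $v = \sgn(u)\sqrt{2\psi(u)}$; for $c$ chosen small enough (depending only on $\kappa_0$ and $K$) this is an analytic diffeomorphism $u = U(v)$ from $(-c,c)$ onto some interval $(v_-, v_+)$ with $U'(0) = 1$ and $v_\pm = \pm c + O(c^2)$. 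Setting $h(v) := (1+U(v))^{a}U'(v)$, which satisfies $h(0) = 1$, the inner integral becomes $\int_{v_-}^{v_+} h(v)\,e^{\pi i\kappa r v^{2}}\,dv$.

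The main term is then extracted by splitting $h(v) = 1 + (h(v)-1)$ and completing the constant piece to the whole line via the Fresnel identity $\int_{-\infty}^{\infty} e^{\pi i\kappa r v^{2}}\,dv = (\kappa r)^{-1/2}\,e^{i\pi/4}$. Multiplication by $r^{a+1}e^{-2\pi i\kappa r}$ produces the advertised principal term $\kappa^{-1/2} r^{a+1/2} e^{i\pi/4} e^{-2\pi i\kappa r}$. The remainders are the Fresnel tails $\int_{|v|\notin[v_-,v_+]} e^{\pi i\kappa r v^{2}}\,dv$ and the correction $\int_{v_-}^{v_+}(h(v)-1)\,e^{\pi i\kappa r v^{2}}\,dv$. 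Writing $h(v) - 1 = v\,\tilde h(v)$ for a smooth $\tilde h$ and using $v\,e^{\pi i\kappa r v^{2}}\,dv = (2\pi i\kappa r)^{-1}\,d(e^{\pi i\kappa r v^{2}})$, both pieces can be handled by a single integration by parts; each contributes a bound of order $(\kappa r)^{-1}$. After multiplication by $r^{a+1}$, this gives a total error of $O(r^{a}/\kappa) = O_{\kappa_0}(r^{a})$, as required.

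The main obstacle is to make every constant uniform in $a \in K$ and in $\kappa \geq \kappa_0$. The Morse substitution $U$ is independent of $a$ and $\kappa$, so the admissible $c$ and the endpoints $v_\pm$ are fixed once and for all. What must be checked is that $h$ and $\tilde h$, together with their first derivatives, have $L^{\infty}$-bounds on $[v_-, v_+]$ that are continuous in $a$ and independent of $\kappa$; compactness of $K$ then absorbs the $a$-dependence into a single constant $C$. This is the only point where a careful write-up cannot simply cite the standard stationary-phase machinery, since the integrand carries the extra factor $(1+U(v))^{a}$ whose $a$-dependence must be tracked through the integration by parts.
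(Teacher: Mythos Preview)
Your argument is correct. The Morse change of variables reduces the integral to a Fresnel model, the main term is read off from $\int_{\R}e^{\pi i\kappa r v^{2}}\,dv=(\kappa r)^{-1/2}e^{i\pi/4}$, and a single integration by parts disposes of both the Fresnel tails and the $v\tilde h(v)$ correction, each contributing $O((\kappa r)^{-1})$. The uniformity in $a\in K$ is handled exactly as you describe: the substitution $U$ is independent of $a$ and $\kappa$, and $h(v)=(1+U(v))^{a}U'(v)$ together with $\tilde h$ and $\tilde h'$ depend continuously on $(a,v)\in K\times[v_-,v_+]$, so compactness yields uniform bounds. One cosmetic remark: the admissible $c$ for the Morse lemma is in fact absolute, since $\psi$ does not involve $a$ or $\kappa$; allowing it to depend on $\kappa_0$ and $K$ does no harm but is not needed.

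As to comparison with the paper: there is nothing to compare against. The paper does not prove this lemma but simply records that it ``can be inferred from Levinson \cite{L} and Gonek \cite{MR728143}''. What you have written is precisely the direct stationary-phase computation one would extract from those sources (Gonek's Lemma~2 is the closest match and is proved in essentially the same way). Your write-up has the modest advantage of being self-contained and of making the uniformity in $a$ and $\kappa$ explicit, which the cited references do not emphasize.
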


\begin{lemma}\label{L:levinson2}
Let $\kappa_0>0$  and $K\subset \R$ a compact set be given. Then there exist constants $c>0$,  $C>0$  
such that for any $r>1$, $\kappa\ge\kappa_0$,  $a\in K$ and $\frac{r}{2}\le A<B\le 2r$ we have
\begin{displaymath}
\int_A^B x^a \exp\Bigl\{2\pi i\Bigl(\kappa x\log\frac{x}{er}\Bigr)\Bigr\}\,dx= I_0+
R_1+R_2+R_3,
\end{displaymath}
where
\begin{displaymath}
|R_1|\le C r^a,\quad |R_2|\le C\frac{r^{a+1}}{|A-r|+r^{1/2}},
\quad |R_3|\le C\frac{r^{a+1}}{|B-r|+r^{1/2}},
\end{displaymath}
and where $I_0=\kappa^{-\frac12}r^{a+\frac12}e^{\frac{\pi i}{4}}e^{-2\pi i \kappa r}$ for $A\le r\le B$ 
and $0$ in all other cases.
\end{lemma}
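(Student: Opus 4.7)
\emph{Plan.} Write $f(x) = \kappa x\log(x/er)$, so that $f'(x) = \kappa\log(x/r)$ is monotone with unique zero at the stationary point $x=r$, where $f(r) = -\kappa r$ produces the oscillation $e^{-2\pi i\kappa r}$ present in $I_0$. On $[r/2,2r]$ one has the elementary lower bound $|f'(x)|\geq c_0\kappa|x-r|/r$. The proof combines two standard tools on subintervals $J\subset[r/2,2r]$: (i) the first derivative test (Lemma~\ref{1dertest}) on any subinterval avoiding $r$, using that the variation plus maximum of the monotone $x^a$ on $J$ is bounded by $C_K r^a$ uniformly in $a\in K$, so that
\[
\Bigl|\int_J x^a e^{2\pi i f(x)}\,dx\Bigr|\leq \frac{V}{\pi\mu}\leq \frac{Cr^{a+1}}{\kappa\,\mathrm{dist}(J,r)};
\]
and (ii) the trivial bound $|J|\cdot C_K r^a$, preferable whenever $|J|\lesssim r^{1/2}$.

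\emph{Case $r\in[A,B]$.} I would use the signed identity
\[
\int_A^B x^a e^{2\pi i f}\,dx = \int_{r(1-c)}^{r(1+c)} x^a e^{2\pi i f}\,dx - \int_{r(1-c)}^A x^a e^{2\pi i f}\,dx - \int_B^{r(1+c)} x^a e^{2\pi i f}\,dx,
\]
with $c$ the constant from Lemma~\ref{L:levinson1}, which reduces the middle piece to $I_0+R_1$ with $|R_1|\leq Cr^a$. Each correction integral is then handled by splitting its range into a part at distance $\geq r^{1/2}$ from $r$ (handled by tool (i), yielding a bound $\lesssim r^{a+1}/(\kappa(|A-r|+r^{1/2}))$) and a slab of length $\leq r^{1/2}$ adjacent to the endpoint near $r$ (handled by tool (ii), contributing $\lesssim r^{a+1/2}$); the latter contribution is $\lesssim r^{a+1}/(|A-r|+r^{1/2})$ precisely because the slab is nonempty only when $|A-r|\leq r^{1/2}$, forcing $|A-r|+r^{1/2}\asymp r^{1/2}$. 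This produces the claimed $R_2$, and by the symmetric argument at the right endpoint, $R_3$.

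\emph{Case $r\notin[A,B]$.} Now $I_0=0$ and, WLOG, $B<r$. The same splitting applied directly to $[A,B]$ at the endpoint $B$ (the one closer to $r$) gives $|\int_A^B|\leq Cr^{a+1}/(|B-r|+r^{1/2})$, taken as $R_3$, with $R_1=R_2=0$.

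\emph{Main obstacle.} The only subtle regime is when an endpoint of integration lies within $r^{1/2}$ of the stationary point $r$: the first derivative test degenerates there because $|f'|$ is no longer bounded below by any useful quantity, and no clever integration by parts recovers the sharp denominator $|A-r|+r^{1/2}$. The uniform remedy is to peel off a buffer of width $r^{1/2}$ adjacent to the troublesome endpoint and estimate it trivially by length times $Cr^a$, which exactly matches the $r^{1/2}$ term in the denominator of $R_2$ or $R_3$; on the complementary interval $|f'|\gtrsim\kappa r^{-1/2}$, and the first derivative test then produces the sharp dependence on $|A-r|$ or $|B-r|$.
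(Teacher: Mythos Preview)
Your argument is correct. The paper itself does not supply a proof of this lemma: it simply states that Lemmas~\ref{L:levinson1} and~\ref{L:levinson2} ``can be inferred from Levinson~\cite{L} and Gonek~\cite{MR728143}'' and moves on. So there is no paper--proof to match against, only the cited sources, and your write-up is a perfectly acceptable self-contained derivation using the tools already quoted in Section~\ref{known facts}.

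A brief comparison with the approach in the cited references: Gonek's Lemma~2 proves a statement of exactly this shape by a direct stationary-phase analysis (contour shift / second-derivative argument) that produces the main term $I_0$ and the three error pieces simultaneously. Your route is slightly different in that you \emph{import} the main term from Lemma~\ref{L:levinson1} via the signed decomposition
\[
\int_A^B=\int_{r(1-c)}^{r(1+c)}-\int_{r(1-c)}^A-\int_B^{r(1+c)},
\]
and then treat the two correction integrals purely with the first-derivative test plus a trivial $r^{1/2}$--buffer near the stationary point. This is a clean way to organise the bookkeeping and makes the origin of the denominator $|A-r|+r^{1/2}$ very transparent; the price is the dependence on Lemma~\ref{L:levinson1}, whereas a direct stationary-phase proof would establish both lemmas at once. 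Either route is standard. Two small points worth making explicit when you write it up in full: (i) on $[r/2,2r]$ the function $x^a$ is monotone with $\max|x^a|\le 2^{|a|}r^a$, so the ``variation plus max'' quantity $V$ in Lemmas~\ref{1dertest}--\ref{2dertest} is indeed $\le C_K r^a$ uniformly for $a\in K$; and (ii) in the case $A<r(1-c)$ (resp.\ $B>r(1+c)$) the correction integral lives entirely at distance $\ge cr$ from $r$, so the first-derivative test already gives $\Orden(r^a)$, which is absorbed in $R_1$ (or, since $|A-r|\le r/2$, equally well in the $R_2$ bound).
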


Now we state the best zero-free region known.  
A proof can be found in the book of Ivi\'c \cite[{Thm.~6.1}]{MR792089}.

\begin{theorem}\label{zero-free}
There is an absolute constant $C>0$ such that $\zeta(s)\ne0$ for 
\begin{equation}
\sigma\ge 1-C(\log t)^{-\frac23}(\log\log t)^{-\frac13}\qquad  (t\ge t_0).
\end{equation}
\end{theorem}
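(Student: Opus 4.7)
The plan is to combine a Vinogradov--Korobov-style subconvexity bound for $\zeta(s)$ in a thin strip to the left of $\sigma = 1$ with the classical Hadamard--de la Vall\'ee Poussin positivity argument. First I would establish that, uniformly in $\sigma \ge 1 - c(\log t)^{-2/3}(\log\log t)^{-1/3}$, one has $|\zeta(\sigma+it)| \ll (\log t)^{2/3}(\log\log t)^{1/3}$. The proof uses a truncated Dirichlet series (via the approximate functional equation or a Perron-type cutoff) to reduce matters to estimating $\sum_{N < n \le 2N} n^{-\sigma-it}$ for $N$ a power of $t$. Vinogradov's mean value theorem, applied after several Weyl differencings on the phase $t\log n$, provides nontrivial cancellation; the delicate point is calibrating the number of differencings and the parameters in the Vinogradov bound so that the resulting exponential-sum saving, summed dyadically over $N$, produces exactly the claimed estimate for $\zeta$.

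Next I would upgrade this size estimate into a pointwise bound for $\zeta'/\zeta$ in a slightly thinner substrip, via the Borel--Carath\'eodory theorem together with Cauchy's derivative estimate applied to the analytic function $\log\bigl(\zeta(s)(s-1)\bigr)$, whose growth on the relevant region is controlled by step one. Subtracting the explicit contribution $1/(s-1)$ of the pole poses no difficulty.

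Third, I would run the positivity argument: from the Dirichlet series of $-\zeta'/\zeta$ valid for $\sigma>1$ and the identity $3+4\cos\theta+\cos 2\theta = 2(1+\cos\theta)^2 \ge 0$, one obtains
\[
-3\Re\frac{\zeta'}{\zeta}(\sigma) - 4\Re\frac{\zeta'}{\zeta}(\sigma+it) - \Re\frac{\zeta'}{\zeta}(\sigma+2it) \ge 0.
\]
A hypothetical zero $\beta+i\gamma$ of $\zeta$ close to the line $\sigma=1$ forces the middle term to contribute at least $-1/(\sigma-\beta)$ via the Hadamard product representation of $\zeta'/\zeta$, while the first term is bounded above by $1/(\sigma-1)+O(1)$ coming from the pole at $s=1$, and the last by the bound obtained in step two. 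Balancing these three quantities by taking $\sigma-1$ of order $(\log t)^{-2/3}(\log\log t)^{-1/3}$ and collecting terms yields a contradiction unless $\beta$ satisfies the stated inequality.

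The main obstacle is the first step. Steps two and three are entirely standard in analytic number theory, and the positivity identity is essentially de la Vall\'ee Poussin's original trick. What distinguishes the Vinogradov--Korobov region from the classical $\sigma \ge 1 - c/\log t$ region is precisely the strength of the $\zeta$-bound near the edge of the critical strip; extracting that bound requires careful bookkeeping of the Vinogradov mean value estimate along with the dyadic and Weyl-differencing parameters.
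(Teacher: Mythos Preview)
Your outline is essentially the standard Vinogradov--Korobov argument, and it is correct in spirit. Note, however, that the paper does not supply its own proof of this theorem at all: it is stated as a known result with the remark ``A proof can be found in the book of Ivi\'c [Thm.~6.1]'' and is then used as a black box (specifically, to control $\sum_{T<\gamma\le T'}(\gamma/2\pi)^{\kappa(\beta-1/2)}$ at the start of the proof of Theorem~\ref{maintheorem}). Your sketch is precisely the approach one finds in Ivi\'c or Titchmarsh, so there is no divergence to discuss.

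One small organizational remark: your ``step two'' (a pointwise bound on $\zeta'/\zeta$ via Borel--Carath\'eodory) is not quite how the argument is usually arranged. In the standard version one works directly with the Hadamard-product representation
\[
-\Re\frac{\zeta'}{\zeta}(\sigma+it)=\sum_{\rho}\Re\frac{1}{\sigma+it-\rho}+O(\log t)
\]
for $\sigma>1$, and the Vinogradov bound on $|\zeta|$ from step one enters only to control $\log|\zeta(\sigma+2it)|$ (equivalently, the tail of the sum over zeros) in the $3$--$4$--$1$ inequality. A separate pointwise estimate for $\zeta'/\zeta$ in the substrip is not needed. This does not affect the final conclusion, but it slightly streamlines the bookkeeping.
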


\begin{lemma}\label{boundvartheta}
Let $\rho=\beta+i\gamma$ with $\beta\in(0,1)$ and $\gamma>0$ 
and define $\tt$ by $\rho=\frac12+i\tt$. Then for any $\kappa>0$ we have
\begin{equation}
e^{2i\kappa \vartheta(\tt)}=
\Bigl(\frac{\gamma}{2\pi}\Bigr)^{\kappa(\beta-\frac12)}
\exp\Bigl\{i\Bigl(\kappa\gamma\log\frac{\gamma}{2\pi}-\kappa\gamma-\frac{\kappa\pi}{4}\Bigr)
\Bigr\}
(1+\Orden_{\kappa}(\gamma^{-1})).
\end{equation}
\end{lemma}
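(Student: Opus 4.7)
The plan is to start from the asymptotic expansion of $\vartheta(t)$ that follows from Stirling's formula applied to $\log\Gamma$ in the definition of $\chi(\frac12+it)$, and then substitute the complex argument $\tau=\gamma-i(\beta-\frac12)$, separating the real and imaginary contributions carefully to first order in $1/\gamma$.

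Concretely, from the identity $-\log\chi(\frac12+it)=2i\vartheta(t)$ extended to $-i(\Omega-\frac12)$, Stirling's formula for $\log\Gamma$ gives
\[
\vartheta(\tau)=\frac{\tau}{2}\log\frac{\tau}{2\pi}-\frac{\tau}{2}-\frac{\pi}{8}+\Orden(|\tau|^{-1}),
\]
uniformly for complex $\tau$ with $|\Im\tau|\le\frac12$ and $\Re\tau$ large. This is the first step: justify that this classical expansion, originally written for real $t$, persists for complex $\tau$ with bounded (actually $<\tfrac12$) imaginary part, which is immediate since $\Re\tau\in[\gamma_0,\infty)$ keeps us uniformly away from the poles of $\Gamma$ and the cut of $\log\chi$.

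Next, I would write $\tau=\gamma+i\delta$ with $\delta=\frac12-\beta\in(-\tfrac12,\tfrac12)$ and expand
\[
\log\frac{\tau}{2\pi}=\log\frac{\gamma}{2\pi}+\log\Bigl(1+\frac{i\delta}{\gamma}\Bigr)=\log\frac{\gamma}{2\pi}+\frac{i\delta}{\gamma}+\Orden(\gamma^{-2}).
\]
Multiplying by $\tau=\gamma+i\delta$,
\[
\tau\log\frac{\tau}{2\pi}=\gamma\log\frac{\gamma}{2\pi}+i\delta\log\frac{\gamma}{2\pi}+i\delta+\Orden(\gamma^{-1}).
\]
Multiplying by $i\kappa$ and subtracting $i\kappa\tau=i\kappa\gamma-\kappa\delta$, the two $\kappa\delta$ terms cancel and one finds
\[
2i\kappa\vartheta(\tau)=-\kappa\delta\log\frac{\gamma}{2\pi}+i\Bigl(\kappa\gamma\log\frac{\gamma}{2\pi}-\kappa\gamma-\frac{\kappa\pi}{4}\Bigr)+\Orden_\kappa(\gamma^{-1}).
\]
Since $-\delta=\beta-\frac12$, the real part is exactly $\kappa(\beta-\frac12)\log(\gamma/2\pi)$. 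Exponentiating, and using $e^{\Orden_\kappa(\gamma^{-1})}=1+\Orden_\kappa(\gamma^{-1})$, yields the stated formula.

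The only non-routine point is the very first step, the uniform validity of Stirling for complex $\tau$ with $|\Im\tau|<\frac12$; all subsequent manipulations are purely algebraic expansions to first order. The implied constants depend on $\kappa$ only through the prefactor $\kappa$ multiplying the expansions, which is why the error is $\Orden_\kappa(\gamma^{-1})$ rather than absolute.
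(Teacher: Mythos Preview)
Your proof is correct and is essentially the same as the paper's, which simply invokes Titchmarsh's asymptotic $\chi(s)=(2\pi/t)^{\sigma+it-\frac12}e^{i(t+\pi/4)}(1+\Orden(t^{-1}))$ valid on any fixed vertical strip. The only difference is packaging: the paper plugs $s=\rho=\beta+i\gamma$ directly into $\chi(s)^{-\kappa}=e^{2i\kappa\vartheta(\tt)}$ and reads off the modulus and phase without any Taylor expansion, whereas you work with the equivalent asymptotic for $\vartheta(\tau)$ and expand $\log(1+i\delta/\gamma)$ by hand; both routes are the same Stirling computation.
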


\begin{proof}
This follows easily from   Titchmarsh \cite[{eq.~(4.12.3)}]{T}.
\end{proof}

We will use the following Theorem of Piatetski-Shapiro \cite{MR0059302}.
%% ({\cyr  Pyatecki{\u i}-Shapiro ).

\begin{theorem}\label{Piatetski}
For $\varepsilon>0$, $\frac{2}{3}<\gamma<1$ and all $k$ with $1\le k\le x^{1-\gamma}\log^2 x$,  we have
\begin{equation}
\sum_{p\le x} e^{2\pi i k p^\gamma}\ll x^{\frac{11}{12}+\varepsilon}.
\end{equation}
\end{theorem}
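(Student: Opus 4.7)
The plan is to bound the exponential sum by reducing it to bilinear forms via Vaughan's identity and then estimating each piece through the derivative tests (Lemmas~\ref{1dertest}--\ref{2dertest}) in the style of van der Corput's method. Writing $e(u):=e^{2\pi i u}$, by partial summation it suffices to prove $S(x):=\sum_{n\le x}\Lambda(n)\,e(kn^\gamma)\ll x^{11/12+\varepsilon}$, since prime powers of exponent $\ge 2$ contribute only $O(x^{1/2}\log x)$.

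I would apply Vaughan's identity with truncation parameters $U=V=x^{1/3}$. Up to the negligible short range $n\le U$, this decomposes $S(x)$ into a bounded number of \emph{Type I} sums of the form $\sum_{m\le M}a_m\sum_{n\le x/m}e(km^\gamma n^\gamma)$ with $M\le x^{2/3}$ and $|a_m|\le\tau(m)\log m$, together with \emph{Type II} sums $\sum_{U<m\le x/V}\sum_{V<n\le x/m}a_m b_n\,e(k(mn)^\gamma)$ with $|a_m|,|b_n|\ll x^\varepsilon$.

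For the Type I pieces the inner phase $f_m(n)=km^\gamma n^\gamma$ has $f_m''(n)\asymp k x^{\gamma-2}m^2$ on $n\asymp x/m$. Applying Lemma~\ref{2dertest} on dyadic subintervals and passing from integral to sum (which costs an additional $\lambda^{-1/2}$ term), the inner sum is $\ll\sqrt{kx^\gamma}+k^{-1/2}x^{1-\gamma/2}/m$; summing against $|a_m|\ll x^\varepsilon$ over $m\le M=x^{2/3}$ and using the hypothesis $k\le x^{1-\gamma}\log^2 x$ yields a total of $O(x^{11/12+\varepsilon})$. For the Type II sum I would apply the Cauchy--Schwarz inequality in the variable $m$; after squaring and switching the order of summation, the inner exponential sum in $m$ has phase $k(n_1^\gamma-n_2^\gamma)m^\gamma$, whose second derivative is $\asymp k|n_1^\gamma-n_2^\gamma|m^{\gamma-2}$. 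Another application of Lemma~\ref{2dertest} (in its discrete form, namely van der Corput's $B$-process) bounds each off-diagonal contribution, while the diagonal $n_1=n_2$ is trivial; summing in $n_1,n_2$ and taking square roots produces the same $x^{11/12+\varepsilon}$, the choice $U=V=x^{1/3}$ being designed precisely to match the Type I and Type II bounds.

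The main obstacle will be this Type II estimate: the van der Corput argument requires a careful arrangement of differencing (an $AB$- or $B^2A$-process) to beat the trivial bound, and the exponent $11/12$ emerges only once the Vaughan parameters $U,V$, the range $M$, the depth of differencing, and the size constraint $k\le x^{1-\gamma}\log^2 x$ are balanced simultaneously; it is at exactly this balance point that the hypothesis $\gamma>2/3$ is used.
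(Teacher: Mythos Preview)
The paper does not prove this statement at all: Theorem~\ref{Piatetski} is merely quoted from Piatetski--Shapiro's 1953 paper as an external tool, so there is no argument in the paper to compare your proposal against.

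That said, two remarks on the proposal itself. First, the strategy (decompose via an identity for $\Lambda$, then bound Type~I and Type~II sums by van der Corput/exponent-pair estimates) is indeed the standard modern route to results of this kind; Piatetski--Shapiro's original argument predates Vaughan's identity and used Vinogradov's combinatorial decomposition instead, but the two approaches are morally equivalent.

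Second, your Type~I estimate as written does not give $x^{11/12+\varepsilon}$. With your claimed inner-sum bound $\sqrt{kx^\gamma}+k^{-1/2}x^{1-\gamma/2}/m$ and $k\le x^{1-\gamma}\log^2x$, the first term is of size $x^{1/2}$ uniformly in $m$, so summing the weights $|a_m|\ll x^\varepsilon$ over $m\le M=x^{2/3}$ produces $x^{2/3}\cdot x^{1/2+\varepsilon}=x^{7/6+\varepsilon}$, which is worse than trivial. A bare application of Lemma~\ref{2dertest} (equivalently, the exponent pair $(1/2,1/2)$) is not enough here: one must either combine it with the trivial bound $x/m$ and further van der Corput differencing (an $A$- or $AB$-process) depending on the range of $m$, or invoke a nontrivial exponent pair, and it is this more delicate balancing --- not the single second-derivative bound you wrote --- that actually produces the exponent $11/12$ and uses the hypothesis $\gamma>2/3$.
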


The exponent 11/12 in this Theorem has been improved, but with a smaller 
range of $\gamma$.
For our needs the range is important. Therefore, we will use this theorem as stated. 

\section{Main theorem}

\begin{theorem}\label{maintheorem}
For $\kappa>0$  
\begin{equation}\label{E:main}
\sum_{0<\Re\tt<T}e^{2i\kappa\vartheta(\tt)}=-
\frac{e^{\frac{\pi i}{4}(1-\kappa)}}{\sqrt{\kappa}}
\sum_{n<(T/2\pi)^{\kappa}}
\frac{\Lambda(n)}{n^{\frac12-\frac{1}{2\kappa}}}
e^{-2\pi i \kappa n^{1/\kappa}}+\Orden_\kappa(T^{\frac{1-\kappa}{2}}\log T)+ 
\Orden_\kappa(T^{\frac{\kappa}{2}}\log^2T).
\end{equation}
\end{theorem}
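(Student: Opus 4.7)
The plan is to represent the sum $\sum_{0<\Re\tt\le T}e^{2i\kappa\vartheta(\tt)}$ as a sum of residues of the meromorphic function $f(s):=\chi(s)^{-\kappa}\zeta'(s)/\zeta(s)$, exploiting the identity $\chi(\rho)^{-\kappa}=e^{2i\kappa\vartheta(\tt)}$ at every non-trivial zero $\rho=\tfrac12+i\tt$. I would integrate $f$ counterclockwise around the rectangle $R$ with vertices $c+i\delta$, $c+iT'$, $1-c+iT'$, $1-c+i\delta$, where $c=1+1/\log T$, $\delta$ is a fixed positive constant lying below the first nontrivial zero, and $T'\in[T,T+1]$ is chosen by the standard device so that $|T'-\gamma|\gg 1/\log T$ for every ordinate $\gamma$ of a zero. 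Since $\chi(s)^{-\kappa}$ has no singularities in $R$, the residue theorem recovers the target sum (counted with multiplicity) up to the at most $O(\log T)$ zeros in $(T,T']$, each contributing $O(T^{\kappa/2})$ via Lemma~\ref{boundvartheta}, an admissible error.

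The main term arises from the right vertical $I_1=\int_{c+i\delta}^{c+iT'}f(s)\,ds$. Since $\Re s=c>1$ I would substitute $\zeta'/\zeta(s)=-\sum_n\Lambda(n)n^{-s}$, and via Stirling's formula for $\log\Gamma$ expand
\[
\chi(c+it)^{-\kappa}n^{-it}=e^{-i\kappa\pi/4}(t/(2\pi))^{\kappa(c-\frac12)}\exp\bigl\{i\kappa t\log(t/(2\pi))-i\kappa t-it\log n\bigr\}(1+O(1/t)).
\]
The substitution $x=t/(2\pi)$ brings each integral into the canonical oscillatory form $\int x^{a}\exp(2\pi i\kappa x\log(x/(er)))\,dx$ of Lemma~\ref{L:levinson2} with $a=\kappa(c-\tfrac12)$ and $r=n^{1/\kappa}$. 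A dyadic decomposition of $[\delta/(2\pi),T'/(2\pi)]$ isolates the unique block containing the saddle $x=r$, which contributes $I_0=\kappa^{-\frac12}r^{a+\frac12}e^{i\pi/4}e^{-2\pi i\kappa r}$ whenever $n\le(T'/(2\pi))^\kappa$; other blocks yield only $R_1$-type errors. Collecting the prefactor $-in^{-c}\Lambda(n)\cdot 2\pi$ and using $n^{-c}\cdot r^{a+\frac12}=n^{-\frac12+1/(2\kappa)}$ together with $e^{-i\kappa\pi/4}\cdot e^{i\pi/4}=e^{i\pi(1-\kappa)/4}$, division by $2\pi i$ produces exactly the main sum in~\eqref{E:main}.

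The three remaining sides supply the error. On the top horizontal $\Im s=T'$, the classical estimate $|\zeta'/\zeta(\sigma+iT')|\ll\log^2T$ (valid thanks to the choice of $T'$) coupled with $|\chi(\sigma+iT')^{-\kappa}|\ll T^{\kappa(\sigma-\frac12)}$ and the length of $[1-c,c]$ yields $O_\kappa(T^{\kappa/2}\log^2T)$; the short horizontal at height $\delta$ is $O(1)$. On the left vertical $\Re s=1-c$ I would invoke the functional equation to write $\zeta'/\zeta(s)=\chi'/\chi(s)-\zeta'/\zeta(1-s)$: the $\chi'/\chi$ summand integrates to a boundary value of size $O(T^{-\kappa/2})$, and $\zeta'/\zeta(1-s)$ expands as a Dirichlet series since $\Re(1-s)=c>1$. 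The analogous stationary-phase equation $\kappa\log(t/(2\pi))=-\log n$ places the saddle at $t=2\pi n^{-1/\kappa}\le 2\pi$, below the integration range when $\delta>2\pi$; Lemmas~\ref{1dertest} and~\ref{2dertest} combined with the amplitude decay $|\chi(1-c+it)^{-\kappa}|\ll t^{-\kappa/2}$ then yield $O_\kappa(T^{(1-\kappa)/2}\log T)$ after the $\Lambda(n)n^{-c}$-weighted summation.

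The main obstacle is the meticulous bookkeeping of the three error terms $R_1,R_2,R_3$ of Lemma~\ref{L:levinson2} across the $O(\log T)$ dyadic blocks and over $n\ll T^\kappa$. The $R_1$ contributions, weighted by $\Lambda(n)n^{-c}\sim\log n/n^{1/2}$ on the scale $n\le T^\kappa$, sum to $O(T^{\kappa/2}\log T)$; combined with the $\log^2T$ factor of $|\zeta'/\zeta|$ on the top horizontal this is exactly what forces the $T^{\kappa/2}\log^2T$ error. The $R_3$ contribution is most delicate in the regime $n\approx(T/(2\pi))^\kappa$, where the saddle $r=n^{1/\kappa}$ approaches the endpoint $T'/(2\pi)$ and the denominator $|B-r|+r^{1/2}$ in $R_3$ collapses to $r^{1/2}$; the resulting per-term bound $\log n\cdot n^{-\frac12+1/(2\kappa)}\sim T^{(1-\kappa)/2}\log T$ is precisely the first error term of~\eqref{E:main}. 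A subordinate issue is absorbing the $O(1/t)$ Stirling correction in the asymptotic of $\chi^{-\kappa}$, which produces a lower-order sum treated by the same devices, and the tail $n>(T/(2\pi))^\kappa$, where the phase derivative is monotone and bounded away from zero so that Lemma~\ref{1dertest} applies cleanly.
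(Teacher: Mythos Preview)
Your proposal is correct and follows essentially the same strategy as the paper: both express the zero sum as a contour integral of $\chi(s)^{-\kappa}\zeta'(s)/\zeta(s)$ around a rectangle, extract the main term from the right vertical side at $\sigma_1=1+1/\log T$ via the Dirichlet series for $\zeta'/\zeta$ and the stationary-phase Lemma~\ref{L:levinson2}, and bound the top horizontal using $\zeta'/\zeta\ll\log^2T$ on a well-chosen height.

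The one substantive difference is the treatment of the left vertical side. The paper places it far to the left, at $\sigma_0\le-1$ (specifically $\sigma_0=\tfrac12-\tfrac2\kappa$ for $\kappa<\tfrac43$, else $\sigma_0=-1$), so that $|\chi(\sigma_0+it)^{-\kappa}|\ll t^{-2}$ and the integral is trivially $O_\kappa(1)$ by size alone; no oscillation is used. You instead keep the contour symmetric at $\Re s=1-c$ and invoke the functional equation $\zeta'/\zeta(s)=\chi'/\chi(s)-\zeta'/\zeta(1-s)$ together with the first derivative test. This also works---indeed the first derivative test already gives the left side as $O_\kappa(\log T)$, so your stated bound $O_\kappa(T^{(1-\kappa)/2}\log T)$ is pessimistic here but harmless. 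The paper's route is shorter, since pushing further left avoids a second oscillatory-integral analysis altogether; your route has the mild aesthetic benefit of a symmetric contour with no $\kappa$-dependent choice of $\sigma_0$. The remaining differences---your dyadic decomposition versus the paper's three-interval split $I_0\cup I_1\cup I_2$ of the right-side integrals, and your direct use of Lemma~\ref{boundvartheta} versus the paper's additional appeal to the zero-free region for the $T\to T'$ adjustment---are cosmetic.
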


\begin{proof}
There exists $T'$ such that $T<T'<T+1$ with $|T'-\gamma|\gg1/\log T$ for any 
ordinate $\gamma$ of a zero of $\zeta(s)$ and such that 
\begin{displaymath}
\sum_{0<\Re\tt\le T}e^{2i\kappa\vartheta(\tt)} =\sum_{0<\Re\tt\le T'}e^{2i\kappa\vartheta(\tt)}
+\Orden_\kappa(T^{\frac{\kappa}{2}}).
\end{displaymath}
In fact, here the difference between the two sums is composed of at most of $C\log T$ terms.
Lemma \ref{boundvartheta}  yields
\begin{displaymath}
\Bigl|\sum_{T<\Re\tt\le T'}e^{2i\kappa\vartheta(\tt)}\Bigl|\le \sum_{T<\gamma\le T'}
\Bigl(\frac{\gamma}{2\pi}\Bigr)^{\kappa(\beta-\frac12)}.
\end{displaymath}
Applying  Theorem \ref{zero-free} we have
\begin{multline*}
\Bigl|\sum_{T<\Re\tt\le T'}e^{2i\kappa\vartheta(\tt)}\Bigl|\le C\log T
\Bigl(\frac{T'}{2\pi}\Bigr)^{\kappa(\frac12-c(\log T')^{-\frac23}
(\log\log T')^{-\frac13})}\\
\ll_\kappa C(\log T)T^{\frac{\kappa}{2}} 
e^{-c \kappa (\log T')^{\frac13}(\log\log T')^{-\frac13}}=\Orden_\kappa(T^{\frac{\kappa}{2}}).
\end{multline*}

Also 
\begin{multline*}
\Bigl|
\sum_{(T/2\pi)^\kappa<n\le (T'/2\pi)^\kappa}\frac{\Lambda(n)}{n^{\frac12-\frac{1}{2\kappa}}}
e^{-2\pi i \kappa n^{\frac1\kappa}}\Bigr|\ll_\kappa (\log T) T^{\frac12-\frac\kappa2}(T'^\kappa-T^\kappa)
\\
\ll_\kappa (\log T)T^{\frac12+\frac\kappa2}\Bigl|1-\Bigl(\frac{T+1}{T}\Bigr)^\kappa\Bigr|
\ll_\kappa (\log T)T^{\frac\kappa2-\frac12}.
\end{multline*}

Therefore, replacing $T$ by $T'$ if needed,  we may assume for the rest of the proof 
that $T$ satisfies $|T-\gamma|\gg 1/\log T$
for any ordinate $\gamma$ of a zero of $\zeta(s)$. 
\medskip

Since (cf.~Davenport \cite[{p.~80}]{MR1790423})
\begin{equation}
\frac{\zeta'(s)}{\zeta(s)}=B-\frac{1}{s-1}+\sum_{n=1}^\infty\Bigl(\frac{1}{s+2n}-\frac{1}{2n}\Bigr)
+\sum_{\rho}\Bigl(\frac{1}{s-\rho}+\frac{1}{\rho}\Bigr)
\end{equation}
we have by Cauchy's Theorem,
\begin{equation}
U(T):=\sum_{0<\Re\tt<T}e^{2i\kappa\vartheta(\tt)}=
\frac{1}{2\pi i}\int_{C_T}\frac{\zeta'(s)}{\zeta(s)}\chi(s)^{-\kappa}\,ds.
\end{equation}
Here the path of integration  $C_T$ is the   boundary  of the rectangle $(\sigma_0,\sigma_1)\times(2\pi,T)$  
with $1<\sigma_1<3/2$ and $T$  with $|T-\gamma_n|\gg 1/\log T$ for all $n$.  We will take
$\sigma_1=1+\frac{1}{\log T}$, but we maintain the simpler notation $\sigma_1$.
The restriction $\sigma_1<3/2$ allows one to obtain explicit bounds (independent of
$\sigma_1$) on all pertinent inequalities.

The value of $\sigma_0$ depends on $\kappa$. We will take 
$\sigma_0=\frac12-\frac2\kappa$ for $0<\kappa<\frac43$ and $\sigma_0=-1$ when 
$\kappa\ge \frac43$. In this way $\sigma_0\le -1$ in all cases.

Then we have
\begin{multline*}
U(T)=\frac{1}{2\pi i}\int_{\sigma_0+2\pi i}^{\sigma_1+2\pi i}\frac{1}{\chi(s)^{\kappa}}
\frac{\zeta'(s)}{\zeta(s)}\,ds+\frac{1}{2\pi i}\int_{\sigma_1+2\pi i}^{\sigma_1+Ti}\cdots-
\frac{1}{2\pi i}\int_{\sigma_0+Ti}^{\sigma_1+Ti}\cdots-\frac{1}{2\pi i}\int_{\sigma_0+2\pi i}^{\sigma_0+Ti}\cdots\\
:=U_1(T)+U_2(T)-U_3(T)-U_4(T).
\end{multline*}
Lemmas \ref{B1}, \ref{B3} and \ref{B4}
yield
\[U(T)=\Orden_\kappa(1)+U_2(T)+\Orden_\kappa(T^{\frac{\kappa}{2}}\log T)+
\Orden_\kappa(1).\]
Now we apply Lemma \ref{B2} and we see that $U_2(T)$ is 
equal to the sum on the right in \eqref{E:main} plus the   remainders $\Orden_\kappa(T^{\frac{\kappa}{2}}\log^2 T)$
and  $\Orden_\kappa(T^{\frac{1-\kappa}{2}}\log T)$. 
Therefore, we have our result.
\end{proof}

\begin{corollary}\label{Cor:Limits}
For $0<\kappa<\frac{6}{5}$ we have 
\begin{equation}
\lim_{T\to \infty}\frac{1}{N(T)}\sum_{0<\Re\tt\le T}e^{2i\kappa\vartheta(\tt)}=0.
\end{equation}
For $\kappa=0$ the above limit is  easily seen to be $1$.
\end{corollary}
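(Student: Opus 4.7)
The plan is to divide the Main Theorem identity \eqref{E:main1} by $N(T)$ and show each piece tends to zero. Since $N(T) \asymp T\log T$, the two error terms contribute $O(T^{-(1+\kappa)/2})$ and $O(T^{\kappa/2-1}\log T)$, both of which tend to zero whenever $\kappa < 2$, and in particular for $\kappa < 6/5$. Thus everything reduces to showing that
\[S(T) := \sum_{n \le (T/2\pi)^\kappa}\frac{\Lambda(n)}{n^{1/2-1/(2\kappa)}}\, e^{-2\pi i \kappa n^{1/\kappa}} = o(T\log T).\]

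I would first split off the contribution from prime powers $n = p^m$ with $m \ge 2$; bounding the weight $n^{(1-\kappa)/(2\kappa)}$ monotonically and using $\sum_{m\ge 2}\psi(X^{1/m}) \ll X^{1/2}\log X$ shows these terms contribute at most $O(T^{\max(\kappa,1)/2}\log T)$, which is acceptable. It then remains to bound the prime sum
\[S_1(T) := \sum_{p\le X}(\log p)\, p^{(1-\kappa)/(2\kappa)}\,e^{-2\pi i \kappa p^{1/\kappa}}, \qquad X = (T/2\pi)^\kappa,\]
in three regimes. For $\kappa=1$ the phase is trivial since $e^{-2\pi i p}=1$, so $S_1(T) = \theta(T/2\pi) \sim T/(2\pi)$ by the Prime Number Theorem. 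For $0<\kappa<1$ the exponent $1/\kappa$ exceeds $1$ and Theorem \ref{Piatetski} does not apply, but the trivial bound $|S_1(T)| \ll X^{(1+1/\kappa)/2} = T^{(\kappa+1)/2}$ already suffices because $(\kappa+1)/2 < 1$.

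The main work is the case $1 < \kappa < 6/5$, where $\gamma := 1/\kappa \in (5/6, 1) \subset (2/3,1)$. Theorem \ref{Piatetski} then applies, with $k=\kappa$ satisfying $k \le X^{1-\gamma}\log^2 X$ for large $T$, yielding $|B(Y)| := \bigl|\sum_{p \le Y}e^{-2\pi i \kappa p^{1/\kappa}}\bigr| \ll Y^{11/12+\varepsilon}$. I would then apply Abel summation with weight $f(y) = (\log y)\, y^{(1-\kappa)/(2\kappa)}$ to transfer this cancellation to $S_1(T)$; both the boundary term $f(X)B(X)$ and the integral $\int_2^X B(y) f'(y)\,dy$ are bounded by $O(X^{11/12+(1-\kappa)/(2\kappa)+\varepsilon}\log X)$, which upon substituting $X = (T/2\pi)^\kappa$ becomes $O(T^{5\kappa/12 + 1/2 + \varepsilon'}\log T)$. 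This is $o(T\log T)$ precisely when $5\kappa/12 + 1/2 < 1$, i.e., $\kappa < 6/5$. The delicate step, and the source of the threshold, is this balance between the Piatetski-Shapiro exponent $11/12$ and the weight exponent coming from the arithmetic sum.
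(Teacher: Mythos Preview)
Your argument is correct and follows essentially the same route as the paper: split into the ranges $0<\kappa<1$, $\kappa=1$, and $1<\kappa<6/5$, using the trivial bound, the Prime Number Theorem, and Piatetski--Shapiro plus partial summation respectively. The only difference is cosmetic: the paper passes directly from Theorem~\ref{Piatetski} to a bound for $\sum_{n\le x}\Lambda(n)e^{2\pi i\kappa n^{1/\kappa}}$, whereas you spell out the two implicit steps (separating higher prime powers and inserting the $\log p$ weight by Abel summation), which is arguably cleaner since Theorem~\ref{Piatetski} is stated only for unweighted sums over primes.
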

\begin{proof}
We have
$N(T)=\card\{\tt: 0<\Re\tt\le T\}=\Orden(T\log T)$. For $0<\kappa<1$, the trivial bound yields
\[\sum_{n\le(T/2\pi)^{\kappa}}
\frac{\Lambda(n)}{n^{\frac12-\frac{1}{2\kappa}}}
e^{-2\pi i \kappa n^{1/\kappa}}=\Orden_\kappa(T^{\frac{\kappa}{2}+\frac12}\log T)\]
and the limit is easily shown to be $0$. 

In the case $\kappa=1$ we apply  Theorem \ref{maintheorem},
and observe that in this case
\[\Bigl|\frac{e^{\pi i/4}}{\sqrt{\kappa}}\sum_{n\le(T/2\pi)^{\kappa}}
\frac{\Lambda(n)}{n^{\frac12-\frac{1}{2\kappa}}}
e^{-2\pi i \kappa n^{1/\kappa}}\Bigr|\le \sum_{n\le T/2\pi}\Lambda(n)=\Orden(T).\]
Therefore, 
\[\frac{1}{N(T)}\sum_{0<\Re\tt\le T}e^{2i\kappa\vartheta(\tt)}=\Orden(1/\log T).\]

For $1<\kappa<\frac{3}{2}$, the  Piatetski-Shapiro Theorem \ref{Piatetski} 
with $k=\kappa$, 
and $\gamma=1/\kappa$   yields, for any $\varepsilon>0$,
\[\sum_{n\le x}\Lambda(n)e^{2\pi i \kappa n^\gamma}=\Orden(x^{\frac{11}{12}+\varepsilon}).\]
Partial summation then yields  
\[\sum_{n\le (T/2\pi)^{\kappa}}
\frac{\Lambda(n)}{n^{\frac12-\frac{1}{2\kappa}}}
e^{-2\pi i \kappa n^{1/\kappa}}=\Orden(T^{\frac{5\kappa}{12}+\frac{1}{2}+\varepsilon \kappa}).\]
It follows that 
\[\frac{1}{N(T)}\sum_{0<\Re\tt\le T}e^{2i\kappa\vartheta(\tt)}=
\Orden(T^{\frac{5\kappa}{12}-\frac{1}{2}+\varepsilon\kappa}/\log T)+\Orden(T^{\frac{\kappa}{2}-1}\log T).\]
So  the limit is $0$ for $1<\kappa<\frac{6}{5}<\frac{3}{2}$.
\end{proof}

\section{Bounds.}

\subsection{Bound of the bottom integral.}

\begin{lemma}\label{B1}
Uniformly for  all $\sigma_1\in(1,3/2)$ 
\begin{equation}
U_1(T)=\frac{1}{2\pi i}\int_{\sigma_0+2\pi i}^{\sigma_1+2\pi i}\frac{1}{\chi(s)^{\kappa}}
\frac{\zeta'(s)}{\zeta(s)}\,ds=\Orden_\kappa(1).
\end{equation}
\end{lemma}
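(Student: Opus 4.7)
The plan is to observe that $U_1(T)$ integrates a continuous, bounded function over a compact horizontal segment of $\kappa$-bounded length, so the bound is essentially a compactness argument. The segment joins $\sigma_0+2\pi i$ to $\sigma_1+2\pi i$, lies entirely at the fixed height $\Im s=2\pi$, and has length $\sigma_1-\sigma_0$. Since $\sigma_1<3/2$ and $\sigma_0\ge\min(\tfrac12-\tfrac{2}{\kappa},-1)$, this length is $\Orden_\kappa(1)$, uniformly in $\sigma_1\in(1,3/2)$ and in $T$ (the integrand itself has no $T$-dependence).

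First I would check that $\chi(s)^{-\kappa}$ is bounded on the segment. The two cuts defining $\Omega$ are the half-lines $(-\infty,0]$ and $[1,\infty)$, both on the real axis; since our segment lies on $\Im s=2\pi\ne0$, it is contained in $\Omega$, where the analytic branch of $\log\chi$ fixed in the paper is defined and where $\chi$ is nonvanishing. Thus $|\chi(s)^{-\kappa}|=|\chi(s)|^{-\kappa}$ is a continuous function of $\sigma$ on the compact interval $[\sigma_0,\sigma_1]$, and so bounded by some $M_\kappa$ that depends only on $\kappa$ through the lower endpoint $\sigma_0=\sigma_0(\kappa)$.

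Next I would check that $\zeta'(s)/\zeta(s)$ is bounded on the segment. Its only singularities in the strip $|t|\le 2\pi+1$ are the pole of $\zeta$ at $s=1$ and the trivial zeros at $s=-2,-4,\ldots$, all of which lie on the real axis and hence at distance $\ge 2\pi$ from our segment; the first nontrivial zero has $\gamma_1>14>2\pi$, so no such zero is nearby either. Hence $\zeta'/\zeta$ is continuous on the compact segment, and bounded there by some $N_\kappa$ depending only on $\kappa$.

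Combining, $|U_1(T)|\le \frac{1}{2\pi}(\sigma_1-\sigma_0) M_\kappa N_\kappa=\Orden_\kappa(1)$, uniformly in $\sigma_1$ and in $T$. There is no substantive obstacle here: the whole point is that the horizontal line $\Im s=2\pi$ is bounded away from every singularity of the integrand, from both cuts of $\Omega$, and from every zero of $\zeta$, so standard continuity/compactness delivers the required bound.
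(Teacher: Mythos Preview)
Your proof is correct and follows essentially the same approach as the paper: the paper's proof is a single sentence observing that $U_1(T)$ does not depend on $T$ and is a continuous function of $\sigma_1$ on the compact interval $[1,3/2]$, hence bounded. You give the same compactness argument in more detail, explicitly verifying that the horizontal segment at height $2\pi$ avoids the cuts of $\Omega$ and all singularities of $\zeta'/\zeta$, so the integrand is continuous on a segment of $\kappa$-bounded length.
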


\begin{proof}
$U_1(T)$ is a well defined and continuous function of $\sigma_1\in[1,3/2]$ (it does not depend on $T$).
\end{proof}

\subsection{Bound of the top integral.}

\begin{lemma}\label{B3}
Let $T$ be such that $|T-\gamma_n|\gg 1/\log T$, and let $\sigma_1=1+\frac{1}{\log T}$. Then 
\begin{equation}
U_3(T)=\frac{1}{2\pi i}\int_{\sigma_0+ i T}^{\sigma_1+i T}\frac{1}{\chi(s)^{\kappa}}
\frac{\zeta'(s)}{\zeta(s)}\,ds=\Orden_\kappa(T^{\kappa/2}\log T).
\end{equation}
\end{lemma}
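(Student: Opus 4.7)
The plan is to parametrize the horizontal segment by $s=\sigma+iT$, $\sigma\in[\sigma_0,\sigma_1]$, and bound the integrand $\chi(s)^{-\kappa}\zeta'(s)/\zeta(s)$ separately on three pieces corresponding to (i) $\sigma\in[\sigma_0,-1]$, (ii) $\sigma\in[-1,1]$ and (iii) $\sigma\in[1,\sigma_1]$. The basic ingredient for the $\chi^{-\kappa}$ factor is Stirling's formula, which yields $|\chi(\sigma+iT)|^{-\kappa}\ll_\kappa (T/2\pi)^{\kappa(\sigma-1/2)}$ for $T$ large, uniformly in $\sigma$ on any bounded interval. The ingredients for $\zeta'/\zeta$ are the standard estimates, which require the hypothesis that $|T-\gamma|\gg 1/\log T$ for every ordinate $\gamma$ of a zero.

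On piece (iii), $\sigma\in[1,\sigma_1]$ with $\sigma_1=1+1/\log T$, I would use the Euler product bound $\zeta'(s)/\zeta(s)\ll\log T$; combined with $|\chi^{-\kappa}|\ll T^{\kappa/2}$ and interval length $1/\log T$, this gives a contribution $\Orden_\kappa(T^{\kappa/2})$. On piece (i), $\sigma\in[\sigma_0,-1]$, I would apply the functional equation to write
\[
\frac{\zeta'(s)}{\zeta(s)}=\frac{\chi'(s)}{\chi(s)}-\frac{\zeta'(1-s)}{\zeta(1-s)};
\]
the second term is $\Orden(1)$ because $\Re(1-s)\ge 2$ where the Dirichlet series converges absolutely, and the first term is $\Orden(\log T)$ by Stirling. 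Since $|\chi(\sigma+iT)|^{-\kappa}\le T^{-3\kappa/2}$ on this range, the total contribution from piece (i) is $\Orden_\kappa(\log T)$, which is absorbed.

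The critical piece is (ii), $\sigma\in[-1,1]$. Here I would invoke the classical identity (Titchmarsh, §9.6, or Davenport §16) which, for $T$ chosen to avoid zeros, gives
\[
\frac{\zeta'(s)}{\zeta(s)}=\sum_{|T-\gamma|<1}\frac{1}{s-\rho}+\Orden(\log T).
\]
Because $|T-\gamma|\gg 1/\log T$ for each of the $\Orden(\log T)$ zeros in that range, this sum is $\Orden(\log^2 T)$. The essential observation is then that the bound on $|\chi^{-\kappa}|$ grows exponentially in $\sigma$, so its integral saves a logarithm:
\[
\int_{-1}^{1}T^{\kappa(\sigma-1/2)}\,d\sigma=\frac{T^{\kappa/2}-T^{-3\kappa/2}}{\kappa\log T}\ll_\kappa \frac{T^{\kappa/2}}{\log T}.
\]
Multiplying by the $\log^2 T$ bound for $\zeta'/\zeta$ gives $\Orden_\kappa(T^{\kappa/2}\log T)$ for piece (ii), which dominates the other two pieces and yields the claimed total bound.

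The main obstacle is precisely this last balancing act: a naive sup-norm bound on the product would yield $T^{\kappa/2}\log^2 T$, losing a logarithm. The trick is to exploit the fact that $|\chi^{-\kappa}|$ is strictly smaller than its maximum $T^{\kappa/2}$ on most of $[-1,1]$, and to perform the $\sigma$-integration of this factor first; this is where the single logarithm is recovered. Verifying that the $\sigma_0$-dependence is harmless (since $\sigma_0$ depends only on $\kappa$, the implicit constants in $\Orden_\kappa$ absorb it) is routine.
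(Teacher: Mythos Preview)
Your argument is correct and rests on the same key idea as the paper: bound $\zeta'/\zeta$ by $\Orden(\log^2 T)$ on the critical range and then integrate the factor $T^{\kappa(\sigma-1/2)}$ in $\sigma$ to recover one logarithm. The paper carries this out more directly, obtaining the $\Orden(\log^2 T)$ bound uniformly on the whole segment $[\sigma_0,\sigma_1]$ (Titchmarsh~9.6(A) for $-1<\sigma<\sigma_1$, extended to $\sigma_0\le\sigma\le-1$ by Ingham's Theorem~27) and dispensing with your three-piece split; note also that the Dirichlet-series bound $\zeta'/\zeta\ll\log T$ is not uniform on $[1,\sigma_1]$ as you state (it degenerates to $1/(\sigma-1)$ near $\sigma=1$), but this is harmless since the $\log^2 T$ bound from piece~(ii) covers that range as well.
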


\begin{proof}
We apply Titchmarsh \cite[{eq.~(4.12.3)}]{T} so that 
\[\chi(s)=\Bigl(\frac{2\pi}{t}\Bigr)^{\sigma+it-\frac12} e^{i(t+\frac14\pi)}
\bigl\{1+\Orden(t^{-1})\bigr\}\]
on any strip $\alpha\le\sigma\le\beta$ and for $t\to+\infty$.
Therefore, we will have
\begin{equation}\label{chikappa}
\chi(s)^{-\kappa}=\Bigl(\frac{t}{2\pi}\Bigr)^{\kappa(\sigma+it-\frac12)} e^{-i
\kappa(t+\frac14\pi)}
\bigl\{1+\Orden_\kappa(t^{-1})\bigr\}.
\end{equation}
It follows that for $s=\sigma+iT$ with $\sigma_0<\sigma<\sigma_1$ we will have
\[|\chi(\sigma+iT)|^{-\kappa}\le C T^{\kappa(\sigma-\frac12)}.\]
We choose $T$ satisfying  $|T-\gamma|\gg 1/\log T$,  
so that by applying Theorem 9.6(A) of Titchmarsh
we  get on the segment $s=\sigma+iT$ with $-1<\sigma<\sigma_1$ that 
\[\frac{\zeta'(s)}{\zeta(s)}=\Orden(\log^2T).\]
By 
Ingham \cite[{Theorem 27 p.~73}]{MR1074573} this extends to the entire segment $\sigma_0<\sigma<\sigma_1$.  Then, with the constant $C$
depending on $\kappa$.
\[|U_3(T)|\le C\int_{\sigma_0}^{\sigma_1} T^{\kappa(\sigma-1/2)}\log^2(T)\,d\sigma\le 
C\frac{T^{\kappa(\sigma_1-1/2)}}{\kappa\log T}\log^2T.\]
Taking $\sigma_1=1+\frac{1}{\log T}$ we get
\[|U_3(T)|=\Orden_{\kappa} (T^{\kappa/2}\log T).\]
\end{proof}

\subsection{Bound of the left integral.}

\begin{lemma}\label{B4}
For $0\le\kappa$ we have
\begin{equation}
U_4(T)=\frac{1}{2\pi i}\int_{\sigma_0+ 2\pi i}^{\sigma_0+i T}\frac{1}{\chi(s)^{\kappa}}
\frac{\zeta'(s)}{\zeta(s)}\,ds=\Orden_\kappa(1).
\end{equation}
\end{lemma}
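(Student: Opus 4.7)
The plan is to exploit the fact that on the vertical line $\Re s=\sigma_0\le -1$ the factor $\chi(s)^{-\kappa}$ decays like $t^{-2}$, while $\zeta'(s)/\zeta(s)$ can be controlled through the functional equation. First I would rewrite the integrand via
\[
\frac{\zeta'(s)}{\zeta(s)}=\frac{\chi'(s)}{\chi(s)}-\frac{\zeta'(1-s)}{\zeta(1-s)},
\]
which follows from logarithmic differentiation of $\zeta(s)=\chi(s)\zeta(1-s)$. On the line $\Re s=\sigma_0$ we have $\Re(1-s)\ge 2$, so the absolutely convergent Dirichlet series for $\zeta'/\zeta$ gives $\zeta'(1-s)/\zeta(1-s)=\Orden(1)$ uniformly in $t$. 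For the other piece I would use Stirling, via the representation $\chi'(s)/\chi(s)=\log\pi-\tfrac12\psi(s/2)-\tfrac12\psi(\tfrac12-\tfrac{s}{2})$, to conclude $\chi'(s)/\chi(s)=\Orden(\log t)$ on the vertical line.

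Next I would estimate $|\chi(s)^{-\kappa}|$ on $\Re s=\sigma_0$. From the asymptotic \eqref{chikappa} (applicable on any vertical strip as $t\to+\infty$) one reads off
\[
|\chi(\sigma_0+it)^{-\kappa}|=\Bigl(\frac{t}{2\pi}\Bigr)^{\kappa(\sigma_0-\frac12)}\bigl(1+\Orden_\kappa(t^{-1})\bigr).
\]
With the prescribed choice $\sigma_0=\tfrac12-\tfrac{2}{\kappa}$ for $\kappa<\tfrac43$ and $\sigma_0=-1$ otherwise, the exponent $\kappa(\sigma_0-\tfrac12)$ equals $-2$ in the first range and is $\le -2$ in the second; so in both cases $|\chi(\sigma_0+it)^{-\kappa}|=\Orden_\kappa(t^{-2})$. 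Combining the two estimates gives an integrand of size $\Orden_\kappa(t^{-2}\log t)$, which is integrable on $[2\pi,\infty)$, producing the required $\Orden_\kappa(1)$ bound.

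As a cleaner alternative for the contribution of $\chi'/\chi$, I would observe that $\chi(s)^{-\kappa}\chi'(s)/\chi(s)=-\tfrac{1}{\kappa}\tfrac{d}{ds}\chi(s)^{-\kappa}$, so that part of the integral telescopes to $-\tfrac{1}{\kappa}[\chi^{-\kappa}(\sigma_0+iT)-\chi^{-\kappa}(\sigma_0+2\pi i)]$, which is $\Orden_\kappa(1)$ by the previous $t^{-2}$ bound. This avoids any loss of the logarithm and sidesteps any need to bound $\chi'/\chi$.

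The only mild obstacle I anticipate is bookkeeping the range of validity of the asymptotic \eqref{chikappa} uniformly down to $t=2\pi$: for bounded $t$ in $[2\pi,t_0]$ the integrand is continuous and uniformly bounded (in $T$) since $\zeta$ has no zeros and no poles on $\Re s=\sigma_0\le -1$ for this finite range, so that piece contributes $\Orden_\kappa(1)$ trivially, and the asymptotic bound handles $t\ge t_0$. Putting the finite and the tail contributions together yields $U_4(T)=\Orden_\kappa(1)$, completing the argument.
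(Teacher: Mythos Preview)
Your proof is correct and follows the same overall structure as the paper's: bound $\zeta'(s)/\zeta(s)$ by $\Orden(\log t)$ on the line $\Re s=\sigma_0$, bound $|\chi(s)^{-\kappa}|$ by $\Orden_\kappa(t^{\kappa(\sigma_0-1/2)})\le\Orden_\kappa(t^{-2})$ via \eqref{chikappa}, and then observe that $t^{-2}\log t$ is integrable on $[2\pi,\infty)$.

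The only real difference is in how the $\zeta'/\zeta$ bound is obtained. The paper simply cites Ingham's Theorem~27 for $\zeta'(s)/\zeta(s)=\Orden(\log t)$ on $\Re s=\sigma_0\le -1$. You instead derive this yourself through the functional equation identity $\zeta'(s)/\zeta(s)=\chi'(s)/\chi(s)-\zeta'(1-s)/\zeta(1-s)$, noting that the second term is $\Orden(1)$ since $\Re(1-s)\ge 2$ and the first is $\Orden(\log t)$ by Stirling. This is exactly the content of Ingham's result in this region, so your argument is a more self-contained version of the same step. Your telescoping observation for the $\chi'/\chi$ contribution is a pleasant extra that avoids even the $\log t$ factor, though it is not needed for the final $\Orden_\kappa(1)$ bound.
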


\begin{proof}
We integrate along the line $s=\sigma_0+it$ with $2\pi<t<T$. 
So we may apply Ingham \cite[{Theorem 27 p.~73}]{MR1074573}
so that 
\[\frac{\zeta'(s)}{\zeta(s)}=\Orden(\log t).\]

Also, applying \eqref{chikappa} we get
\begin{equation}
U_4(T)\ll_{\kappa}\int_{2\pi}^T t^{\kappa(\sigma_0-\frac12)}\log t\,dt.
\end{equation}
The choice of $\sigma_0$ ($\sigma_0=\frac12-\frac2\kappa$ for $0<\kappa<\frac43$, 
and $\sigma_0=-1$ when 
$\kappa\ge \frac43$) guarantees that $\kappa(\sigma_0-\frac12)\le -2$.
Therefore, the integral is bounded.
\end{proof}

\subsection{Bound of the right integral.}

\begin{lemma}\label{B2}
Taking $\sigma_1=1+\frac{1}{\log T}$ we have
\begin{multline}
U_2(T)=\frac{1}{2\pi i}\int_{\sigma_1+ 2\pi i}^{\sigma_1+i T}\frac{1}{\chi(s)^{\kappa}}
\frac{\zeta'(s)}{\zeta(s)}\,ds\\
=-\frac{e^{\frac{\pi i}{4}(1-\kappa)}}{\sqrt{\kappa}}\sum_{n<(T/2\pi)^{\kappa}}
\frac{\Lambda(n)}{n^{\frac12-\frac{1}{2\kappa}}}
e^{-2\pi i \kappa n^{1/\kappa}}+\Orden_\kappa(T^{\frac{1-\kappa}{2}}\log T)+ 
\Orden_\kappa(T^{\frac{\kappa}{2}}\log^2T).
\end{multline}
\end{lemma}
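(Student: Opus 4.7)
The plan is direct stationary-phase analysis. Since $\sigma_1>1$, the Dirichlet expansion $-\zeta'(s)/\zeta(s)=\sum_{n\ge 2}\Lambda(n)\,n^{-s}$ converges absolutely on the line of integration, so, parametrising $s=\sigma_1+it$ and interchanging sum with integral,
\[
U_2(T) = -\frac{1}{2\pi}\sum_{n\ge 2}\Lambda(n)\int_{2\pi}^{T}\chi(\sigma_1+it)^{-\kappa}\,n^{-\sigma_1-it}\,dt.
\]
I would then insert the asymptotic \eqref{chikappa}, combine the phases coming from $\chi^{-\kappa}$ and $n^{-it}$, and set $x=t/(2\pi)$, $r:=n^{1/\kappa}$, $a:=\kappa(\sigma_1-\tfrac12)$. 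Each inner integral becomes, modulo its multiplicative $O_\kappa(t^{-1})$ factor,
\[
2\pi\,n^{-\sigma_1}\,e^{-i\kappa\pi/4}\int_{1}^{T/(2\pi)} x^{a}\exp\bigl\{2\pi i\kappa x\log\bigl(x/(er)\bigr)\bigr\}\,dx,
\]
which is the exact shape of the integrals treated by Lemmas \ref{L:levinson1} and \ref{L:levinson2}; the phase has its unique stationary point at $x=r$.

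Next I would split the $n$-sum according to where $r=n^{1/\kappa}$ sits relative to $[1,T/(2\pi)]$. For $r$ well inside, Lemma \ref{L:levinson1} supplies the saddle contribution on a fixed-proportion window $[r(1-c),r(1+c)]$, and the first-derivative test (Lemma \ref{1dertest}) handles the two complementary pieces, since $|f'(x)|=\kappa|\log(x/r)|$ is bounded away from zero there. For $r$ near the right endpoint $T/(2\pi)$, Lemma \ref{L:levinson2} is tailored to supply both the saddle contribution and the boundary remainders $R_2,R_3$ that deteriorate as $r\to T/(2\pi)$. For $n>(T/2\pi)^\kappa$ no saddle is present in the range and the first- and second-derivative tests alone control the integral. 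Whenever the saddle lies in range the main contribution is $I_0=\kappa^{-1/2}r^{a+1/2}e^{\pi i/4}e^{-2\pi i\kappa r}$. Using $r^{a+1/2}=n^{\sigma_1-1/2+1/(2\kappa)}$, multiplying by the prefactor $-\tfrac{1}{2\pi}\cdot 2\pi\,n^{-\sigma_1}\,e^{-i\kappa\pi/4}$ and summing against $\Lambda(n)$, the saddle contributions collapse to exactly
\[
-\frac{e^{\pi i(1-\kappa)/4}}{\sqrt{\kappa}}\sum_{n<(T/2\pi)^\kappa}\frac{\Lambda(n)}{n^{1/2-1/(2\kappa)}}e^{-2\pi i\kappa n^{1/\kappa}}.
\]

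The main obstacle is the uniform bookkeeping of the remainders, which come from three sources: the multiplicative $O_\kappa(t^{-1})$ factor in \eqref{chikappa}, the uniform errors of size $r^a$ in the two Levinson/Gonek lemmas, and the boundary corrections $R_2,R_3$ from Lemma \ref{L:levinson2}. The first two, after integration and then summing $\Lambda(n)n^{-\sigma_1}$ against $T^{\kappa/2}$ using $-\zeta'/\zeta(\sigma_1)\ll\log T$ together with the standard $\sum_{n\le N}\Lambda(n)n^{-1/2}\ll N^{1/2}$ from the PNT, contribute the remainder $O_\kappa(T^{\kappa/2}\log^2 T)$. The boundary-type remainders are more delicate: a term with $r$ within $O(T^{1/2})$ of $T/(2\pi)$ has per-term size $\asymp n^{-\sigma_1}\cdot r^{a+1}/T^{1/2}\asymp T^{(1-\kappa)/2}$, and summing over $\Lambda(n)$ with the spacing properties of the saddle points $r=n^{1/\kappa}$ near the endpoint produces $O_\kappa(T^{(1-\kappa)/2}\log T)$. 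Combining the main term with the two remainders yields the statement of the lemma.
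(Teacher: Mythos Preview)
Your plan is essentially the paper's proof: expand $-\zeta'/\zeta$ as a Dirichlet series, insert the asymptotic \eqref{chikappa}, and treat each resulting oscillatory integral by stationary phase via the Levinson--Gonek lemmas with first-derivative-test control on the tails. The paper organizes the splitting slightly differently---it cuts each integral $\int_1^{T/2\pi}$ into $I_0=I\cap(\tfrac12 r,2r)$ and two monotone tails, applying Lemma~\ref{L:levinson2} uniformly on $I_0$ (so Lemma~\ref{L:levinson1} is never needed, and the cases $r$ just above $T/2\pi$ are handled automatically with $I_0$ vanishing there)---and then bounds the resulting endpoint sum by the separate Lemma~\ref{lastbound}.

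One correction to your bookkeeping: the error $O_\kappa(T^{\kappa/2}\log^2 T)$ does \emph{not} come from the $O_\kappa(t^{-1})$ factor and the uniform $r^a$ remainders---those together give only $O_\kappa(T^{\kappa/2}\log T)$. The extra logarithm arises precisely from the boundary remainders: summing $\Lambda(n)n^{-1/2}\cdot n^{1/\kappa}/(|T/2\pi-n^{1/\kappa}|+n^{1/(2\kappa)})$ over $n$ with $r=n^{1/\kappa}$ in a fixed-proportion neighbourhood of $T/2\pi$ produces both $O_\kappa(T^{(1-\kappa)/2}\log T)$ (from the worst $O(1)$ terms) and $O_\kappa(T^{\kappa/2}\log^2 T)$ (from the bulk, via an integral of order $\log T$). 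Your final statement is unaffected, but your attribution of the two errors to the two sources is swapped.
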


\begin{proof}
We have taken $\sigma_1>1$ in order to apply the expression as a Dirichlet series. So
\[U_2(T)=-\sum_{n=1}^\infty \frac{\Lambda(n)}{n^{\sigma_1}}\frac{1}{2\pi i}
\int_{\sigma_1+ 2\pi i}^{\sigma_1+i T}\frac{1}{\chi(s)^{\kappa}}e^{-i t\log n}
\,ds\]
with $s=\sigma_1+it$.  Therefore, by \eqref{chikappa}
\[U_2(T)=-\frac{1}{2\pi}\sum_{n=1}^\infty \frac{\Lambda(n)}{n^{\sigma_1}}\int_{2\pi}^T e^{-it\log n}
\Bigl(\frac{t}{2\pi}\Bigr)^{\kappa(\sigma_1+it-\frac12)} e^{-i
\kappa(t+\frac14\pi)}
V(t)\,dt\]
where $V(t)=1+\Orden_\kappa(t^{-1})$.  Then 
\[U_2(T)=-\frac{1}{2\pi}\sum_{n=1}^\infty \frac{\Lambda(n)}{n^{\sigma_1}}\int_{2\pi}^T e^{-it\log n}
\Bigl(\frac{t}{2\pi}\Bigr)^{\kappa(\sigma_1+it-\frac12)} e^{-i
\kappa(t+\frac14\pi)}
\,dt + R\]
where $R$ is the error term. Then
\[|R|\ll_\kappa\sum_{n=2}^\infty \frac{\Lambda(n)}{n^{\sigma_1}}\int_{2\pi}^T
t^{\kappa(\sigma_1-\frac12)-1}\,dt\ll_\kappa
\sum_{n=2}^\infty \frac{\Lambda(n)}{n^{\sigma_1}} T^{\kappa(\sigma_1-\frac12)}=
-T^{\kappa(\sigma_1-\frac12)}\frac{\zeta'(\sigma_1)}{\zeta(\sigma_1)}.\]
Therefore, taking $\sigma_1=1+\frac{1}{\log T}$ we get 
$R=\Orden_\kappa(T^{\kappa/2}\log T)$. 

It remains to compute
\begin{multline*}
V_2(T):=\frac{1}{2\pi}\sum_{n=1}^\infty \frac{\Lambda(n)}{n^{\sigma_1}}\int_{2\pi}^T e^{-it\log n}
\Bigl(\frac{t}{2\pi}\Bigr)^{\kappa(\sigma_1+it-\frac12)} e^{-i
\kappa(t+\frac14\pi)}
\,dt=\\
= e^{-i\kappa\pi/4}\sum_{n=1}^\infty \frac{\Lambda(n)}{n^{\sigma_1}}
\int_{1}^{T/2\pi} x^{\kappa(\sigma_1-\frac12)} 
\exp\Bigl\{2\pi i\Bigl(\kappa x \log x-\kappa x-x\log n \Bigr)\Bigr\}\,dx.
\end{multline*}
These integrals are classical stationary phase integrals. We will apply the 
theorems in Huxley
\cite{MR1295511}, or \cite{MR1420620}, and Lemma \ref{L:levinson2}.  
The stationary phase occurs when $\kappa\log x-\log n=0$. That is for $x=n^{1/\kappa}$.
We subdivide  each integral in  three parts, by dividing the interval of integration
$I=(1,T/2\pi)$ in three parts 
\[I_0:=I\cap(\medio n^{1/\kappa},2n^{1/\kappa}),\qquad I_1:=I\smallsetminus (\medio n^{1/\kappa},+\infty),\qquad I_2=I\smallsetminus(-\infty, 2n^{1/\kappa}).\]
It is easy to see that $I_0\cup I_1\cup I_2=I$ is a partition.  Some of these three
intervals may be empty. The partition depends on $n$, and  is different for each 
integral, but we prefer to maintain a simple notation.

In this way $V_2(T)=V_{2,0}(T)+V_{2,1}(T)+V_{2,2}(T)$ is subdivided in three parts
\[V_{2,j}(T):= e^{-i\kappa\pi/4}\sum_{n=1}^\infty \frac{\Lambda(n)}{n^{\sigma_1}}
\int_{I_j} x^{\kappa(\sigma_1-\frac12)} 
\exp\Bigl\{2\pi i\Bigl(\kappa x \log x-\kappa x-x\log n \Bigr)\Bigr\}\,dx.\]

\subsubsection{Bound of $V_{2,1}(T)$}  In this case the interval of integration
$I_1:=I\smallsetminus (\frac12 n^{1/\kappa},+\infty)$ does not contain the stationary
point $n^{1/\kappa}$, and  for any point $x$ in this interval
we have $x<\frac12n^{\frac{1}{\kappa}}$ so that 
\[f'(x)=\kappa\log x-\log n<-\kappa\log 2<0.\]
We may apply Lemma \ref{1dertest}. 
Since $I_1\subset(1,T/2\pi)$, the constant $V$ in the  lemma is less than 
\[V\le 2\cdot \Bigl(\frac{T}{2\pi}\Bigr)^{\kappa(\sigma_1-\frac12)}.\]
So, we get
\[|V_{2,1}(T)|\le\sum_{n=1}^\infty \frac{\Lambda(n)}{n^{\sigma_1}}
\frac{2\left(\frac{T}{2\pi}\right)^{\kappa(\sigma_1-\frac12)}}{\pi\kappa\log 2}
\ll_\kappa \Bigl(\frac{T}{2\pi}\Bigr)^{\kappa(\sigma_1-\frac12)}
\frac{|\zeta'(\sigma_1)|}{\zeta(\sigma_1)}.\]
We now choose $\sigma_1=1+\frac{1}{\log T}$ and get 
\[|V_{2,1}(T)|\ll_\kappa T^{\frac{\kappa}{2}}\log T.\]

\subsubsection{Bound of $V_{2,2}(T)$} In this case the interval of integration is
$I_2=I\smallsetminus(-\infty, 2n^{1/\kappa})$. Hence, the stationary point 
$n^{\frac{1}{\kappa}}\notin I_2$ and we may apply Lemma \ref{1dertest}  again.
For $x\in I_2$ we have $x\ge 2n^{\frac{1}{\kappa}}$ so that
\[f'(x)=\kappa\log x-\log n \ge \kappa\log 2>0.\]
If $I_2$ is non empty we have $2n^{\frac{1}{\kappa}}\le \frac{T}{2\pi}$. 
The $V$ of Lemma \ref{1dertest} is the same as in the previous case so that
\[|V_{2,2}(T)|\le \sum_{n\le (T/4\pi)^\kappa}\frac{\Lambda(n)}{n^{\sigma_1}}
\frac{2\left(\frac{T}{2\pi}\right)^{\kappa(\sigma_1-\frac12)}}{\pi\kappa\log 2}\]
and, as before, we get the same bound
\[|V_{2,2}(T)|\ll_\kappa T^{\frac{\kappa}{2}}\log T.\]

\subsubsection{Bound of $V_{2,0}(T)$}  

We will apply Lemma \ref{L:levinson2}.
In our case $a=\kappa(\sigma_1-\frac12)$ and 
$f(x)=\kappa x\log x-\kappa x-x\log n=\kappa x\log\frac{x}{e n^{1/\kappa}}$
so that $r=n^{1/\kappa}$.
Since the interval
of integration is given by $I_0=(1,T/2\pi)\cap
(\medio n^{1/\kappa},2n^{1/\kappa})$, we will have $r\in I_0$, only in the case 
$1<n^{\frac{1}{\kappa}}<\frac{T}{2\pi}$. 

For $n>(T/\pi)^\kappa$ the interval $I_0=\emptyset$. 
Therefore,
\[V_{2,0}(T)=
e^{-i\kappa\pi/4}\sum_{n\le (T/\pi)^\kappa} \frac{\Lambda(n)}{n^{\sigma_1}}
\int_{I_0} g_n(x)\exp(2\pi i f_n(x))\,dx\]
so that  Lemma \ref{L:levinson2} yields
\begin{equation}\label{E:inter}
V_{2,0}(T)=e^{-i\kappa \pi/4}\sum_{n<(T/2\pi)^\kappa} \frac{\Lambda(n)}{n^{\sigma_1}}\kappa^{-\frac12}n^{\sigma_1-\frac12+\frac{1}{2\kappa}}
e^{\frac{\pi i}{4}}e^{-2\pi i \kappa n^{1/\kappa}}+R
\end{equation}
where $R$ is bounded by 
\[R\ll \sum_{n< (T/\pi)^\kappa}\frac{\Lambda(n)}{n^{\sigma_1}}\Bigl( 
n^{\sigma_1-\frac12}+\frac{n^{\sigma_1-\frac12+\frac{1}{\kappa}}}
{|A_n-n^{\frac{1}{\kappa}}|+n^{\frac{1}{2\kappa}}}+
\frac{n^{\sigma_1-\frac12+\frac{1}{\kappa}}}
{|B_n-n^{\frac{1}{\kappa}}|+n^{\frac{1}{2\kappa}}}\Bigr)\]
where $(A_n,B_n)=(1,T/2\pi)\cap
(\medio n^{1/\kappa},2n^{1/\kappa})$. 

By partial summation we get
\[\sum_{n< (T/\pi)^\kappa}\frac{\Lambda(n)}{n^{\frac12}}=\Orden_\kappa(T^{\frac{\kappa}{2}}).\]
It is easy to see that $A_n=1$ for $n\le2^{\kappa}$ and $A_n=\frac12n^{\frac{1}{\kappa}}$
for $n\ge2^\kappa$. Then 
\[\sum_{n< (T/\pi)^\kappa}\frac{\Lambda(n)}{n^{\frac12}}\frac{n^{\frac{1}{\kappa}}}
{|A_n-n^{\frac{1}{\kappa}}|+n^{\frac{1}{2\kappa}}}\ll_\kappa
\sum_{n< (T/\pi)^\kappa}\frac{\Lambda(n)}{n^{\frac12}}=\Orden_\kappa(T^{\frac{\kappa}{2}}).\]

We have $B_n=2n^{\frac{1}{\kappa}}$ if $n\le (T/4\pi)^\kappa$ and 
$B_n=T/2\pi$ if $n\ge (T/4\pi)^\kappa$. Therefore,
\begin{multline*}
\sum_{n< (T/\pi)^\kappa}\frac{\Lambda(n)}{n^{\frac12}}\frac{n^{\frac{1}{\kappa}}}
{|B_n-n^{\frac{1}{\kappa}}|+n^{\frac{1}{2\kappa}}}\ll_\kappa\\
\ll_\kappa
\sum_{n\le  (T/4\pi)^{\kappa}}\frac{\Lambda(n)}{n^{\frac12}}+
\sum_{(T/4\pi)^\kappa<n< (T/\pi)^\kappa}\frac{\Lambda(n)}{n^{\frac12}}
\frac{n^{\frac{1}{\kappa}}}
{|T/2\pi-n^{\frac{1}{\kappa}}|+n^{\frac{1}{2\kappa}}}\ll_\kappa\\
\ll_\kappa
T^{\frac{\kappa}{2}}+ T^{\frac{1-\kappa}{2}}\log T+ T^{\frac{\kappa}{2}}\log^2T.
\end{multline*}
(See Lemma \ref{lastbound} for the last step.)

Hence, \eqref{E:inter} implies
\[V_{2,0}(T)=\frac{e^{\frac{\pi i}{4}(1-\kappa)}}{\sqrt{\kappa}}\sum_{n<(T/2\pi)^{\kappa}}
\frac{\Lambda(n)}{n^{\frac12-\frac{1}{2\kappa}}}
e^{-2\pi i \kappa n^{1/\kappa}}+\Orden_\kappa(T^{\frac{1-\kappa}{2}}\log T)+ 
\Orden_\kappa(T^{\frac{\kappa}{2}}\log^2T).\]

\subsubsection{End of the proof of Lemma \ref{B2}}
We saw that $U_2(T)=V_2(T)+\Orden_\kappa(T^{\frac{\kappa}{2}}\log T)$,
so that
\begin{multline*}
V_2(T)=V_{2,0}(T)+V_{2,1}(T)+V_{2,2}(T)=\frac{e^{\frac{\pi i}{4}(1-\kappa)}}{\sqrt{\kappa}}\sum_{n<(T/2\pi)^{\kappa}}
\frac{\Lambda(n)}{n^{\frac12-\frac{1}{2\kappa}}}
e^{-2\pi i \kappa n^{1/\kappa}}+\\
+
\Orden_\kappa(T^{\frac{\kappa}{2}}\log T)+\Orden_\kappa(T^{\frac{1-\kappa}{2}}\log T)+
\Orden_\kappa(T^{\frac{\kappa}{2}}\log^2T).
\end{multline*}
\end{proof}

\begin{lemma}\label{lastbound}
We have 
\begin{equation}
S_\kappa(T):=\sum_{(T/4\pi)^\kappa<n\le (T/\pi)^\kappa}\frac{\Lambda(n)}{n^{\frac12}}
\frac{n^{\frac{1}{\kappa}}}
{|T/2\pi-n^{\frac{1}{\kappa}}|+n^{\frac{1}{2\kappa}}}=\Orden_\kappa(T^{\frac{1-\kappa}{2}}\log T)
+\Orden_\kappa(T^{\frac{\kappa}{2}}\log^2T).
\end{equation}
\end{lemma}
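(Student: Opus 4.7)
The plan is to bound $S_\kappa(T)$ by a dyadic decomposition on the size of the denominator $\Delta_n := |T/2\pi - n^{1/\kappa}| + n^{1/(2\kappa)}$. Set $T_0 = T/(2\pi)$. Throughout the summation range $(T/4\pi)^\kappa < n \le (T/\pi)^\kappa$ we have $n^{1/\kappa} \in (T_0/2, 2T_0)$, so
\[n^{1/\kappa}\asymp T,\qquad n^{1/2}\asymp T^{\kappa/2},\qquad n^{1/(2\kappa)}\asymp T^{1/2},\]
and therefore the summand satisfies
\[\frac{\Lambda(n)\,n^{1/\kappa}}{n^{1/2}\,\Delta_n}\asymp \frac{\Lambda(n)\,T^{1-\kappa/2}}{\Delta_n}.\]
Since $T^{1/2}\ll \Delta_n\ll T$, I would partition the sum according to $2^j\le \Delta_n<2^{j+1}$ for $j$ running over an interval of length $\tfrac12\log_2 T+O_\kappa(1)$, i.e.\ $O_\kappa(\log T)$ dyadic ranges.

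Next I would count the integers $n$ contributing to the $j$-th dyadic piece. The constraint $\Delta_n\asymp 2^j$ confines $m:=n^{1/\kappa}$ to a union of at most two subintervals of total length $O(2^j)$ inside $(T_0/2,2T_0)$. Since the change of variable $m\mapsto m^\kappa=n$ has Jacobian $\kappa m^{\kappa-1}\asymp T^{\kappa-1}$ throughout this range, the corresponding set of $n$ lies in real intervals of total length $O_\kappa(2^j T^{\kappa-1})$. Consequently the number of integers $n$ contributing is at most $O_\kappa(2^jT^{\kappa-1}+1)$, where the $+1$ handles the possibility that the interval is too short to contain more than a single lattice point. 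Using the trivial bound $\Lambda(n)\le \log n \ll_\kappa \log T$ on each such $n$, the $j$-th dyadic piece contributes at most
\[\ll_\kappa (\log T)(2^jT^{\kappa-1}+1)\cdot\frac{T^{1-\kappa/2}}{2^j}=(\log T)\Bigl(T^{\kappa/2}+\frac{T^{1-\kappa/2}}{2^j}\Bigr).\]

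Finally, summing over the $O_\kappa(\log T)$ admissible values of $j$ gives the two claimed error terms: the first term $T^{\kappa/2}\log T$ summed over all dyadic ranges yields $\Orden_\kappa(T^{\kappa/2}\log^2 T)$, while the second term forms a geometric series $\sum_{2^j\gg T^{1/2}}T^{1-\kappa/2}/2^j$ which is dominated by its smallest term $\asymp T^{(1-\kappa)/2}$, contributing $\Orden_\kappa(T^{(1-\kappa)/2}\log T)$ after accounting for the leading $\log T$ factor. Adding the two estimates produces exactly the bound in the statement of the lemma.

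The only delicate point in the argument is the bookkeeping of the ``$+1$'' in the count of integers per dyadic piece: this is precisely what generates the $T^{(1-\kappa)/2}\log T$ contribution, which otherwise would be missed by a pure counting argument assuming the dyadic intervals are long. In particular, for small $\kappa$ the intervals of length $2^j T^{\kappa-1}$ near the stationary regime $2^j\asymp T^{1/2}$ may drop below $1$, so the $+1$ term is the only witness that an isolated prime power $n$ with $n^{1/\kappa}$ extremely close to $T_0$ can still supply a single summand of size $T^{(1-\kappa)/2}\log T$. Keeping this bound honest throughout the dyadic sum is the main technical issue; no deep analytic input beyond $\Lambda(n)\ll\log n$ and elementary interval counting is needed.
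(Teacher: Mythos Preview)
Your argument is correct. After the common initial step of replacing $n^{1/\kappa}$, $n^{1/2}$, and $n^{1/(2\kappa)}$ by their orders of magnitude $T$, $T^{\kappa/2}$, $T^{1/2}$ on the range in question, you and the paper diverge in how the remaining sum
\[
\sum_{(T/4\pi)^\kappa<n\le (T/\pi)^\kappa}\frac{1}{|T/2\pi-n^{1/\kappa}|+\sqrt{T/4\pi}}
\]
is estimated. The paper bounds this sum by twice its maximum term plus the corresponding integral, then changes variables $x=y^\kappa$ to reduce to $\int dy/(|T/2\pi-y|+\sqrt{T/4\pi})=\Orden(\log T)$; the two resulting pieces (max term and integral) produce exactly your two error terms. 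You instead slice dyadically on the size of $\Delta_n$ and count lattice points in each slice via the Jacobian $\kappa m^{\kappa-1}\asymp T^{\kappa-1}$. Both routes are elementary and of comparable length; the integral-comparison argument is marginally more direct here because the summand is monotone on each side of the peak, while your dyadic approach is a bit more robust and would survive, say, a less regular weight in place of $\Lambda(n)$. Your careful tracking of the ``$+1$'' in the lattice-point count plays the same role as the paper's ``twice the maximum term'' correction when passing from sum to integral, and both are what generate the $T^{(1-\kappa)/2}\log T$ contribution.
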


\begin{proof}
We have
\begin{multline*}
S_\kappa(T)\le \frac{\log\left(\frac{T}{\pi}\right)^\kappa}
{\left(\frac{T}{4\pi}\right)^{\frac{\kappa}{2}}}
\sum_{(T/4\pi)^\kappa<n\le (T/\pi)^\kappa}
\frac{n^{\frac{1}{\kappa}}}
{|T/2\pi-n^{\frac{1}{\kappa}}|+n^{\frac{1}{2\kappa}}}\\
\le \frac{\log\left(\frac{T}{\pi}\right)^\kappa}
{\left(\frac{T}{4\pi}\right)^{\frac{\kappa}{2}}}
\Bigl(\frac{T}{\pi}\Bigr)
\sum_{(T/4\pi)^\kappa<n\le (T/\pi)^\kappa}
\frac{1}
{|T/2\pi-n^{\frac{1}{\kappa}}|+n^{\frac{1}{2\kappa}}}\\
\ll_\kappa
T^{1-\frac{\kappa}{2}}\log T
\sum_{(T/4\pi)^\kappa<n\le (T/\pi)^\kappa}
\frac{1}{|T/2\pi-n^{\frac{1}{\kappa}}|+\sqrt{T/4\pi}}.
\end{multline*}
Since the function $(|A-x^{\frac{1}{\kappa}}|+B)^{-1}$ is increasing and then decreasing, 
a geometrical argument yields that the sum  is bounded by two times the 
maximum plus an integral. Thus
\[\sum_{(T/4\pi)^\kappa<n\le (T/\pi)^\kappa}
\frac{1}{|T/2\pi-n^{\frac{1}{\kappa}}|+\sqrt{T/4\pi}}\le 2\Bigl(\frac{4\pi}{T}\Bigr)^{\frac12}+
\int_{(T/4\pi)^\kappa}^{(T/\pi)^\kappa}\frac{dx}{|T/2\pi-x^{\frac{1}{\kappa}}|+\sqrt{T/4\pi}}.\]
We change variables $x=y^\kappa$ and this yields
\begin{multline*}
\sum_{(T/4\pi)^\kappa<n\le (T/\pi)^\kappa}
\frac{1}{|T/2\pi-n^{\frac{1}{\kappa}}|+\sqrt{T/4\pi}}
\le 2\Bigl(\frac{4\pi}{T}\Bigr)^{\frac12}+
\int_{T/4\pi}^{T/\pi}\frac{\kappa y^{\kappa-1}\,dy}{|T/2\pi-y|+\sqrt{T/4\pi}}\\
\le
2\Bigl(\frac{4\pi}{T}\Bigr)^{\frac12}+
\kappa (T/\pi)^\kappa (4\pi/T)
\int_{T/4\pi}^{T/\pi}\frac{dy}{|T/2\pi-y|+\sqrt{T/4\pi}}.
\end{multline*}

It can easily be shown  that the last integral is of order $\log T$. In fact it 
is less than $\log T$
for $T\ge3$. It follows that 
\[0\le S_\kappa(T)\ll_\kappa T^{1-\frac{\kappa}{2}}\log T( T^{-\frac12}+T^{\kappa-1}\log T)\]
completing the proof of Lemma \ref{lastbound}.
\end{proof}

\section{Acknowledgement}

I wish to express my thanks to 
Jan van de Lune  ( Hallum, The Netherlands ) for his encouragements and linguistic assistance.

% bibliography


\begin{thebibliography}{99}

\bibitem{MR1790423}
H. Davenport
\emph{Multiplicative number theory},
3rd edition, revised and with a preface by Hugh L. Montgomery,
Springer-Verlag,
{New York},
{2000}.


\bibitem{E}
H. M. Edwards, 
\emph{Riemann's zeta function},
{Dover Publications Inc.},
{Mineola, NY},
{2001}.


\bibitem{MR511699}
{A. Fujii},
\emph{On the uniformity of the distribution of the zeros of the Riemann zeta function},
{J. Reine Angew. Math.},
\textbf{302},
(1978),
{167--205}.


\bibitem{MR1179115}
A. Fujii, 
\emph{Some observations concerning the distribution of the zeros of the zeta functions. III},
{Proc. Japan Acad. Ser. A Math. Sci.},
\textbf{68},
(1992),
{105--110}.

\bibitem{MR728143}
{S. M. Gonek},
\emph{Mean values of the Riemann zeta function and its derivatives},
{Invent. Math.},
\textbf{75},
(1984),
{123--141}.

\bibitem{MR0419394}
{L. Kuipers,} \&\ 
{H. Niederreiter},
\emph{Uniform distribution of sequences},
{Wiley-Interscience},
{New York},
1974.

\bibitem{HL}
{G. H. Hardy}, \&\ 
{J. E. Littlewood},
\emph{Contributions to the theory of the Riemann zeta-function and the theory of the distribution of primes},
{Acta Math.},
\textbf{41},
(1920),
{119--196}.

\bibitem{MR1295511}
{M. N. Huxley},
\emph{On stationary phase integrals},
{Glasgow Math. J.},
\textbf{36},
(1994),
{355--362}.

\bibitem{MR1420620}
{M. N. Huxley},
\emph{Area, lattice points, and exponential sums},
Oxford University Press,
{New York},
{1996}.

\bibitem{MR1074573}
{A. E. Ingham},
\emph{The distribution of prime numbers},
reprint of the 1932 original, with a foreword by R. C. Vaughan,
Cambridge University Press,
Cambridge,
1990.

\bibitem{MR792089}
{A. Ivi{\'c}},
\emph{The Riemann zeta-function},
{John Wiley \& Sons Inc.},
{New York},
{1985}.


\bibitem{L}
{N. Levinson},
\emph{More than one third of zeros of Riemann's zeta-function are on $\sigma =1/2$},
{Advances in Math.},
\textbf{13},
(1974),
{383--436}.

\bibitem{MR0337821}
{H. L. Montgomery},
\emph{The pair correlation of zeros of the zeta function},
{Analytic number theory (Proc. Sympos. Pure Math., Vol. XXIV, St. Louis Univ., St. Louis, Mo., 1972)}, 
{Amer. Math. Soc.}, {Providence, R.I. },
{1973},
{181--193}.


\bibitem{O}
{A. M. Odlyzko},
\emph{The $10^{20}$-th zero of the Riemann zeta function and 175 millions of its neighbors},
{1992},
{1--163}.


\bibitem{MR0059302}
{I. I. Pyatecki{\u \i }-{\v {S}}apiro},
\emph{On the distribution of prime numbers in sequences of the form $[f(n)]$},
{Mat. Sbornik N.S.},
\textbf{33(75)},
(1953),
{559--566}, (translation contained in \cite[{p.39--47}]{MR1798281}).

\bibitem{MR1798281}
I. Piatetski-Shapiro,
\emph{Selected works of Ilya Piatetski-Shapiro},
Edited and with commentaries by James Cogdell, Simon Gindikin, Peter Sarnak, Pierre Deligne, Stephen Gelbart, Roger Howe and Stephen Rallis. American Mathematical Society, Providence, RI, 2000.


\bibitem{MR0505096}
{H. Rademacher},
\emph{Collected papers of Hans Rademacher. Vol. II},
Edited and with a preface by Emil Grosswald,
{MIT Press},
{Cambridge, Mass.},
{1974}.
  
\bibitem{Rad}
{H. Rademacher},
 \emph{Fourier Analysis in Number Theory},
{Symposium on Harmonic Analysis and Related Integral Transforms}, {Cornell University, Ithaca, N.Y. }, {1956}, {1--25},
(contained in \cite[{p.434--459}]{MR0505096}).

\bibitem{R}
{B. Riemann},
\emph{Über die Anzahl der Primzahlen unter einer gegebenen Grösse},
{Monatsber. Berlin Akad},
({1859}),
{671--680}.

\bibitem{MR528875}
{J. Schoi\ss engeier},
\emph{The connection between the zeros of the $\zeta $-function and sequences $(g(p))$, $p$ prime, ${\rm mod}\,1$},
{Monatsh. Math.},
\textbf{87},
(1979),
{21--52}.



\bibitem{T}
{E. C. Titchmarsh},
\emph{The theory of the Riemann zeta-function},
2nd ed.
with a preface by D. R. Heath-Brown,
Oxford University Press,
{New York},
{1986}.

\end{thebibliography}
\end{document}